\documentclass[12pt]{article}
\usepackage[utf8]{inputenc}
\usepackage{amsthm}
\usepackage{amsmath}
\usepackage{bbm}
\usepackage{amsfonts}
\usepackage{amssymb}
\usepackage[left=2cm,right=2cm,top=2cm,bottom=2cm]{geometry}

\usepackage{hyperref}

\usepackage{tabto}
\TabPositions{2 cm, 4 cm, 6 cm, 8 cm}

\usepackage{tikz-cd}

\usepackage{tikz}
\usetikzlibrary{arrows, automata, positioning}

\usepackage{tocloft}
\setlength\cftbeforetoctitleskip{5cm}
\setlength\cftaftertoctitleskip{2cm}

\usepackage{appendix}

\usepackage{enumerate}

\usepackage{hyperref}
 
 \newtheorem{thmA}{Theorem}

\newtheorem{corA}[thmA]{Corollary}
\newtheorem{lemmaA}[thmA]{Lemma}
\newtheorem{quA}[thmA]{Question}
 
\newtheorem{theorem}{Theorem}[section]
\newtheorem{corollary}[theorem]{Corollary}
\newtheorem{lemma}[theorem]{Lemma}
\newtheorem{claim}[theorem]{Claim}
\newtheorem{proposition}[theorem]{Proposition}

\theoremstyle{definition}
\newtheorem{definition}[theorem]{Definition}
\newtheorem{example}[theorem]{Example}
\newtheorem{remark}[theorem]{Remark}

\newtheorem{question}[theorem]{Question}

\newtheorem*{lemma*}{Lemma}
\newtheorem*{proposition*}{Proposition}
\newtheorem*{theorem*}{Theorem}
\newtheorem*{corollary*}{Corollary}



\newcommand{\Map}{\operatorname{Map}}


\newcommand{\Aut}{\operatorname{Aut}}
\newcommand{\Out}{\operatorname{Out}}
\newcommand{\Inn}{\operatorname{Inn}}
\newcommand{\ad}{\operatorname{ad}}

\newcommand{\scl}{\operatorname{scl}}
\newcommand{\acl}{\operatorname{acl}}
\newcommand{\sacl}{\operatorname{sacl}}
\newcommand{\rot}{\operatorname{rot}}


\newcommand{\HH}{\operatorname{H}}
\newcommand{\EH}{\operatorname{EH}}


\newcommand{\G}{\Gamma}
\newcommand{\gp}{\Gamma\mathcal{G}}


\newcommand{\Zmax}{\mathcal{Z}_{\mathrm{max}}}


\begin{document}

\title{$\Aut$-invariant quasimorphisms on groups}
\author{Francesco Fournier-Facio and Richard D. Wade}
\date{\today}
\maketitle

\begin{abstract}
For a large class of groups, we exhibit an infinite-dimensional space of homogeneous quasimorphisms that are invariant under the action of the automorphism group. This class includes non-elementary hyperbolic groups, infinitely-ended finitely generated groups, some relatively hyperbolic groups, and a class of graph products of groups that includes all right-angled Artin and Coxeter groups that are not virtually abelian.

This was known for $F_2$ by a result of Brandenbursky and Marcinkowski, but is new even for free groups of higher rank, settling a question of Mikl{\'o}s Ab{\'e}rt. The case of graph products of finitely generated abelian groups settles a question of Micha{\l} Marcinkowski. As a consequence, we deduce that a variety of $\Aut$-invariant norms on such groups are unbounded.
\end{abstract}

\section{Introduction}

Let $G$ be a discrete group. A \emph{quasimorphism} (or \emph{quasicharacter}) on $G$ is a function $f : G \to \mathbb{R}$ such that the quantity
$$D(f) := \sup\limits_{g, h} |f(g) + f(h) - f(gh)|$$
called the \emph{defect} of $f$, is finite. A quasimorphism is \emph{homogeneous} (or a \emph{pseudocharacter}) if it restricts to a homomorphism on every cyclic subgroup of $G$.
Quasimorphisms play an important role in various areas of mathematics, such as bounded cohomology \cite{frigerio}, stable commutator length \cite{calegari}, geometric group theory \cite{BF}, knot theory \cite{knot}, symplectic geometry \cite{symplectic} and dynamics \cite{ghys}.

In the study of quasimorphisms, free groups have long been a central case study. As a concrete reason why this case is especially important, homogeneous quasimorphisms on free groups can be used to compute stable commutator length on free groups, which in turn gives upper bounds on the stable commutator length of elements in any group \cite[Chapter 2]{calegari}.
But most of all, free groups have played a central role for historical reasons. A special class of quasimorphisms of free groups, the so-called \emph{Brooks (counting) quasimorphisms}, serve as the first and simplest examples of quasimorphisms that are not perturbations of homomorphisms \cite{brooks, grigorchuk}.
Brooks' construction has been generalized in many different directions. Building on \cite{EF,fuj1,fuj2}, Bestvina and Fujiwara \cite{BF} used a coarse version of Brooks' construction to build quasimorphisms for groups that admit \emph{weakly properly discontinuous} (WPD) actions on Gromov-hyperbolic spaces. This was further weakened to only require a condition called WWPD in \cite{BBF} (see also \cite{HM}). There are also analogues in non-positive curvature: in certain cases one can use this technique to produce quasimorphisms on groups acting on CAT(0) spaces \cite{cat0}, buildings \cite{buildings}, and CAT(0) cube complexes \cite{FFT, median}. \\

Given an automorphism $\phi \in \Aut(G)$, and a quasimorphism $f : G \to \mathbb{R}$, we obtain a quasimorphism $f \circ \phi$. This yields an action of $\Aut(G)$ on the space of quasimorphisms on $G$, which preserves the subspace of homogeneous quasimorphisms. In the presence of a large space of quasimorphisms, this action is poorly understood. This led Mikl{\'o}s Ab{\'e}rt to ask about global fixpoints for this action:

\begin{quA}[{\cite[Question 47]{abert}}]
\label{q}

Let $n \geq 2$ and let $F_n$ be a free group of rank $n$. Do there exist non-zero $\Aut$-invariant homogeneous quasimorphisms on $F_n$?
\end{quA}

He comments ``probably not''. Looking at the first source of quasimorphisms of $F_n$, namely Brooks quasimorphisms and linear combinations thereof, does not help. Indeed, it was shown by Hase that, while the span of (homogenisations of) Brooks quasimorphisms is $\Aut$-invariant \cite{quasiout}, it does not admit any finite-dimensional $\Aut$-invariant subspace \cite{hase}. Using more geometric methods, Brandenbursky and Marcinkowski exhibited an infinite-dimensional space of $\Aut$-invariant homogeneous quasimorphisms on $F_2$ \cite{rank2}.

In another direction, Marcinkowski asked about the existence of $\Aut$-invariant homogeneous quasimorphisms on graph products of finitely generated abelian groups \cite[Problem 5.1]{marcinkowski:graph}. This was achieved in many cases by Karlhofer \cite{karlhofer:graph}, who also constructed $\Aut$-invariant homogeneous quasimorphisms on some free products \cite{karlhofer:free}. \\

In this note, we construct $\Aut$-invariant homogeneous quasimorphisms on many groups, giving a more complete picture:

\begin{thmA}[Theorem \ref{thm:main}]
\label{intro:thm:main}

Let $G$ be a finitely generated group, and suppose that one of the following holds:
\begin{enumerate}
    \item $G$ is a non-elementary Gromov-hyperbolic group;
    \item $G$ is non-elementary hyperbolic relative to a collection of finitely generated subgroups $\mathcal{P}$, and no  group in $\mathcal{P}$ is relatively hyperbolic;
    \item $G$ has infinitely many ends;
    \item $G$ is a graph product of finitely generated abelian groups on a finite graph, and $G$ is not virtually abelian.
\end{enumerate}

Then there exists an infinite-dimensional space of $\Aut$-invariant homogeneous quasimorphisms on $G$.
\end{thmA}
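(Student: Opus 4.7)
The plan is to handle all four cases within a single framework: in each one, $G$ admits a non-elementary acylindrical (or at least WPD) action on a $\delta$-hyperbolic space $X$, and that geometry carries enough $\Aut(G)$-compatibility to yield invariant quasimorphisms. Concretely, the hyperbolic actions I would feed into the construction are: (1) the Cayley graph of the hyperbolic group $G$, using that any automorphism is a quasi-isometry; (2) the coned-off Cayley graph of the relatively hyperbolic group, exploiting the hypothesis that no peripheral is relatively hyperbolic to invoke Drutu--Sapir and Minasyan--Osin rigidity and conclude that $\Aut(G)$ permutes the peripheral subgroups up to conjugacy; (3) a Bass--Serre tree for a Stallings splitting of the infinitely-ended group, treated as a point in a Guirardel--Levitt deformation space on which $\Out(G)$ acts; (4) the extension graph (or a contact-graph analogue) of the graph product, in the spirit of Kim--Koberda and its generalisations.

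On each such action I would run a Bestvina--Fujiwara / Bestvina--Bromberg--Fujiwara projection-complex construction to produce counting-type homogeneous quasimorphisms $f_g$ indexed by quasi-axes of loxodromic elements. The key observation for $\Aut$-invariance is that, up to bounded error, $f_g \circ \phi$ is the quasimorphism $f_{\phi^{-1}(g)}$, so that choosing the indexing data in an $\Aut(G)$-invariant way yields an $\Aut(G)$-invariant family. To extract from this family a genuinely invariant quasimorphism, I would exploit that translation length along $X$ is a conjugacy invariant and therefore $\Aut(G)$ merely permutes conjugacy classes of fixed translation length; by organising the BBF construction so that sufficiently separated quasi-axes within one such $\Aut(G)$-orbit can be telescoped or averaged coherently, one obtains a well-defined quasimorphism that is constant on each orbit. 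Linear independence of countably many such invariant quasimorphisms would then be verified by the standard triangular evaluation argument, picking loxodromic test elements whose quasi-axes are in general position with respect to the different orbits.

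The hard part, I expect, is reconciling the two competing demands: $\Aut$-invariance forces the quasimorphism to be constant on $\Aut(G)$-orbits of conjugacy classes, while infinite-dimensional linear independence requires infinitely many distinguishable $\Aut$-invariants, so one must supply loxodromic orbits that are small enough for the telescoping/averaging to converge yet numerous enough to be independent. Case (4), graph products of finitely generated abelian groups, will be the most delicate: the automorphism groups of right-angled Artin and Coxeter groups are large, involving Laurence--Servatius partial conjugations, transvections and graph symmetries, and they act with rich dynamics on conjugacy classes; here the ``not virtually abelian'' hypothesis is essential, since without it there is no non-elementary action on the extension graph to begin with, and $\HH_b^2$ would in fact vanish. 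Case (3) also requires some care, since the Bass--Serre tree is canonical only up to $\Out(G)$-equivariant deformation, and the BBF input must be chosen to survive that deformation; once this is arranged, the free-splitting structure makes the rest of the argument the most transparent of the four.
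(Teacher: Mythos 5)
Your proposal has a genuine gap at precisely the point you flag as ``the hard part'', and the gap is not a technicality: it is the central difficulty of the problem. You propose to build Brooks/Bestvina--Fujiwara quasimorphisms $f_g$ on $G$ from actions of $G$ on hyperbolic spaces and then make them $\Aut$-invariant by ``telescoping or averaging coherently'' over $\Aut(G)$-orbits of conjugacy classes. But these orbits are infinite in every case at hand (for $F_n$, the orbit of a primitive element is the set of all primitive elements), so a sum of the $f_{\phi^{-1}(g)}$ over an orbit has no reason to converge, let alone to have finite defect, and you give no mechanism to force this. Indeed Hase's theorem \cite{hase}, quoted in the introduction, shows that for $F_n$ the ($\Aut$-invariant) span of homogenised Brooks quasimorphisms contains \emph{no} nonzero finite-dimensional $\Aut$-invariant subspace; this rules out any naive averaging inside that span and is exactly why Ab\'ert guessed the answer to Question~\ref{q} was ``probably not''. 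A secondary issue: in case (1) an automorphism of $G$ is only a quasi-isometry of the Cayley graph, not an isometry, so you do not get an isometric action of $\Aut(G)$ on the space you start from, and the ``indexing data chosen in an $\Aut(G)$-invariant way'' is not actually available there.

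The paper avoids averaging altogether via Lemma~\ref{lem:main}: homogeneous quasimorphisms are conjugacy-invariant, and $\ad_{\phi(g)}=\phi\,\ad_g\,\phi^{-1}$, so \emph{any} homogeneous quasimorphism on $\Aut(G)$ pulls back along $\ad\colon G\to\Aut(G)$ to an $\Aut$-invariant one on $G$ --- the $\Aut(G)$-action on $G$ becomes an inner action upstairs, where invariance is free. The work then shifts to producing quasimorphisms on $\Aut(G)$ that do not vanish on $\Inn(G)$: one takes an acylindrical non-elementary action of $\Aut(G)$ itself on a hyperbolic space (supplied for items 1--3 by Genevois--Horbez \cite{auto:infend}), applies Osin's Theorem~\ref{thm:osin} to see that the infinite normal subgroup $\Inn(G)\cong G/Z(G)$ still acts non-elementarily, and then runs Bestvina--Fujiwara (Theorem~\ref{thm:BF}) to get an infinite-dimensional image of the restriction $Q_h(\Aut(G))\to Q_h(\Inn(G))$. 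Item 4 is handled separately (Theorem~\ref{thm:graph_product}), since $\Aut(\gp)$ need not be acylindrically hyperbolic: there the join decomposition of $\G$, the structure theory of $\Aut(\gp)$ from \cite{auto:graph}, a \emph{finite} average over isomorphic join factors, and pullback along the quotient by a characteristic subgroup (Proposition~\ref{prop:char}) do the job. If you want to salvage a direct construction on $G$, you would need the hyperbolic space to carry an isometric action of all of $\Aut(G)$ extending that of $G$ --- which is essentially what passing to $\Aut(G)$ accomplishes.
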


All four items include the free group, which recovers the case of rank $2$ \cite{rank2} and settles Question \ref{q} in rank at least $3$. Item $4$ settles \cite[Problem 5.1]{marcinkowski:graph} and recovers \cite[Theorems 1.1, 1.2]{karlhofer:graph}. Item $2$ or $3$ recovers the main result of \cite{karlhofer:free} for finitely generated groups. While the assumption of finite generation is necessary, combining our results with previous constructions we are able to include free products of finitely many freely indecomposable groups (Corollary \ref{cor:freeprod}), removing a technical condition in \cite[Theorem 1.1]{karlhofer:graph}.  \\

One motivation to study $\Aut$-invariant homogeneous quasimorphisms is that they provide lower bounds for $\Aut$-invariant norms on groups. On this side there has been more work, where unboundedness of such norms was proven for free groups \cite{norm:free}, then free and surface groups \cite{rank2}, and finally for non-virtually abelian graph products of finitely generated abelian groups \cite{marcinkowski:graph}. Theorem \ref{intro:thm:main} allows us to recover and expand on these results:

\begin{corA}[Proposition \ref{prop:wordnorm}, Corollary \ref{cor:aut:scl}]
\label{intro:cor:wordnorm}

Let $G$ be a group as in Theorem \ref{intro:thm:main}. Let $S \subset G$ be a finite subset, and let $S^{\Aut}$ be the closure of $S$ under the action of $\Aut(G)$. Then the word norm $|\cdot|_{S^{\Aut}}$ on $G$ is unbounded.

Moreover, the stable autocommutator length on $G$ is unbounded on $[G, G]$.
\end{corA}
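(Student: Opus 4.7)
The plan is to derive both parts of the corollary from Theorem \ref{intro:thm:main} by exploiting two elementary universal bounds satisfied by any $\Aut$-invariant homogeneous quasimorphism $f$ on $G$: first, $f$ is constant on each $\Aut$-orbit, so it is bounded on $S^{\Aut}$ by its maximum on the finite set $S$; second, combining homogeneity with $f(\phi(g)) = f(g)$ yields $|f(g^{-1}\phi(g))| \leq D(f)$ on every autocommutator.

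For the word norm, I take any non-zero $\Aut$-invariant homogeneous quasimorphism $f$ provided by Theorem \ref{intro:thm:main} and set $C := \max_{s \in S} |f(s)|$. By $\Aut$-invariance the same bound holds on $S^{\Aut}$, so the standard subadditive estimate gives $|f(g)| \leq |g|_{S^{\Aut}} (C + D(f))$ for every $g$ expressible over $S^{\Aut}$. Since $f \neq 0$ there is $g_0$ with $f(g_0) \neq 0$, and then homogeneity forces $|g_0^n|_{S^{\Aut}} \geq n|f(g_0)|/(C + D(f)) \to \infty$, giving unboundedness.

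For the stable autocommutator length I additionally need $f$ that does not vanish on $[G, G]$. Since $G$ is finitely generated, $\Hom(G, \mathbb{R})$ is finite-dimensional, while Theorem \ref{intro:thm:main} furnishes an infinite-dimensional space of $\Aut$-invariant homogeneous quasimorphisms, so I may pick $f$ that is not a homomorphism. A standard argument (applying the quasimorphism inequality to $g^n h^n = (gh)^n c$ with $c \in [G, G]$, then dividing by $n$ and letting $n \to \infty$) shows that any homogeneous quasimorphism vanishing on $[G, G]$ must be a homomorphism; hence our $f$ is non-zero on some $g_0 \in [G, G]$. Iterating the autocommutator bound gives $|f(h)| \leq (2\,\acl(h) - 1) D(f)$ on the autocommutator subgroup, and applying this to $h = g_0^k$, dividing by $k$ and letting $k \to \infty$ yields $\sacl(g_0) \geq |f(g_0)|/(2 D(f)) > 0$. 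Then $\sacl(g_0^n) = n\,\sacl(g_0)$ is unbounded on $[G, G]$.

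The main conceptual point is the need to use the full infinite-dimensionality of Theorem \ref{intro:thm:main}, rather than mere non-triviality, to secure a homogeneous quasimorphism that is non-zero somewhere on $[G, G]$; after that everything reduces to the standard Bavard-duality style calculation carried out in the $\Aut$-invariant setting.
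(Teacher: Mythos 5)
Your proposal is correct and follows essentially the same route as the paper: the word-norm part is exactly Proposition \ref{prop:wordnorm} (bound $|f(g)|$ by $(\sup_{s\in S}|f(s)|+D(f))\,|g|_{S^{\Aut}}$ using $\Aut$-invariance, then apply it to powers of an element where $f\neq 0$), and the $\sacl$ part is exactly Proposition \ref{prop:aut:scl} together with Corollary \ref{cor:aut:scl} (the defect bound $|f([\phi,g])|\leq D(f)$ on autocommutators, plus finite-dimensionality of $\Hom(G,\mathbb{R})$ to find an $\Aut$-invariant homogeneous quasimorphism that is not a homomorphism and hence nonvanishing on $[G,G]$). The only cosmetic difference is that you prove from scratch the fact that a homogeneous quasimorphism vanishing on $[G,G]$ is a homomorphism, where the paper cites \cite[Lemma 2.24]{calegari}, and your constant $(2\acl(h)-1)D(f)$ is marginally sharper than the paper's $2D(f)\acl(h)$.
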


Note that both the word norm and the stable autocommutator length $\sacl$ are allowed to take infinite values; however $\sacl$ is finite on $[G, G]$. Although the first part of Corollary~\ref{intro:cor:wordnorm} as stated only tells us something when $S^{\Aut}$ generates $G$, which is usually the case in practice, in many cases the same ideas also apply to subgroups: see Remark~\ref{rem:subgroups}.

We define $\sacl$ in Definition \ref{defi:aut:scl}: this is a generalization of $\Aut$-invariant stable commutator length that, unlike the one used in \cite{karlhofer:free, karlhofer:graph}, does not require $G$ to have trivial center. Unboundedness of $\sacl$ moreover implies unboundedness of any mixed stable commutator length on $G$, in the sense of \cite{mixed:scl1, mixed:scl2} (see Lemma \ref{lem:mixed}). \\

The constructions of $\Aut$-invariant homogeneous quasimorphisms from \cite{karlhofer:free, karlhofer:graph} are quite distinct from ours, and have the advantage of being very explicit and easy to compute. Our approach is more conceptual, and goes via the following observation, which is already implicit in \cite{rank2}:

\begin{lemmaA}[Lemma \ref{lem:main}]
\label{intro:lem:main}

Let $G$ be a group, and suppose that $\Aut(G)$ admits homogeneous quasimorphisms that do not vanish identically on $\Inn(G)$. Then $G$ admits non-zero $\Aut$-invariant homogeneous quasimorphisms.
\end{lemmaA}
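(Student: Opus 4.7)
The natural candidate is the pullback. Given a homogeneous quasimorphism $\phi \colon \Aut(G) \to \mathbb{R}$ that is non-trivial on $\Inn(G)$, I would define $f \colon G \to \mathbb{R}$ by $f(g) := \phi(\iota_g)$, where $\iota_g \in \Inn(G)$ denotes conjugation by $g$. Since $g \mapsto \iota_g$ is a group homomorphism $G \to \Aut(G)$, the pullback $f = \phi \circ \iota$ is automatically a homogeneous quasimorphism: one checks $D(f) \leq D(\phi)$ directly, and $f(g^n) = \phi(\iota_g^n) = n\phi(\iota_g) = nf(g)$. Non-triviality of $f$ is immediate from the hypothesis, since any $g$ with $\phi(\iota_g) \neq 0$ witnesses $f(g) \neq 0$.

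The only real content is $\Aut$-invariance. For $\alpha \in \Aut(G)$ one has the identity $\iota_{\alpha(g)} = \alpha \iota_g \alpha^{-1}$ in $\Aut(G)$, so the desired equality $f(\alpha(g)) = f(g)$ reduces to $\phi(\alpha \iota_g \alpha^{-1}) = \phi(\iota_g)$. I would deduce this from the classical fact that every homogeneous quasimorphism on every group is a class function: if $\phi$ is a homogeneous quasimorphism on a group $H$ and $x, h \in H$, then homogeneity gives $n\phi(hxh^{-1}) = \phi((hxh^{-1})^n) = \phi(hx^nh^{-1})$, while two applications of the defect bound (to the triples $(h, x^n, h^{-1})$ and $(hx^n, h^{-1})$) yield $|\phi(hx^nh^{-1}) - \phi(x^n)| \leq 2D(\phi)$; dividing by $n$ and letting $n \to \infty$ gives $\phi(hxh^{-1}) = \phi(x)$.

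There is essentially no obstacle in this plan: the lemma is a formal consequence of conjugation-invariance of homogeneous quasimorphisms applied to the natural homomorphism $G \to \Aut(G)$. The substantive work of the paper lies in the opposite direction, namely in establishing the hypothesis --- producing a homogeneous quasimorphism on $\Aut(G)$ that is non-trivial on $\Inn(G)$ --- for each of the concrete classes of groups appearing in Theorem \ref{intro:thm:main}.
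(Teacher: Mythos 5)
Your proposal is correct and follows essentially the same route as the paper: pull back a homogeneous quasimorphism on $\Aut(G)$ along the natural map $g \mapsto \ad_g$ and deduce $\Aut$-invariance from the identity $\ad_{\alpha(g)} = \alpha \ad_g \alpha^{-1}$ together with conjugacy-invariance of homogeneous quasimorphisms (which the paper cites as Lemma \ref{lem:conjinv} rather than reproving, as you do). No gaps.
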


More precisely, Lemma~\ref{lem:main} states that homogeneous quasimorphisms on $\Aut(G)$ pull back to $\Aut$-invariant homogeneous quasimorphisms on $G$. Thus, to prove Theorem \ref{intro:thm:main} for a certain family of groups, it suffices to find an infinite-dimensional space of homogeneous quasimorphisms on $\Aut(G)$ that restrict to distinct  quasimorphisms on $\Inn(G)$. This allows us to prove the following result:

\begin{thmA}[Theorem~\ref{thm:ah}]
\label{intro:thm:ah}

Let $G$ be a group, and suppose that $G$ is not virtually central and $\Aut(G)$ is acylindrically hyperbolic. Then there exists an infinite-dimensional space of $\Aut$-invariant homogeneous quasimorphisms on $G$.
\end{thmA}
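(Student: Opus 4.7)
The plan is to invoke Lemma~\ref{intro:lem:main}: it suffices to exhibit an infinite-dimensional space of homogeneous quasimorphisms on $\Aut(G)$ that remains linearly independent after restriction to $\Inn(G)$. The hypothesis that $G$ is not virtually central is equivalent to $\Inn(G) \cong G/Z(G)$ being infinite, so $\Inn(G)$ is a genuinely infinite normal subgroup of the acylindrically hyperbolic group $\Aut(G)$.

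The main input is the Dahmani--Guirardel--Osin theorem on normal subgroups of acylindrically hyperbolic groups: every infinite normal subgroup of such a group contains a generalized loxodromic element. Fixing a non-elementary cobounded acylindrical action of $\Aut(G)$ on a hyperbolic space $X$, this produces some $\phi_0 \in \Inn(G)$ acting loxodromically on $X$. Combined with the standard North--South dynamics of acylindrical actions, a ping-pong argument then yields an infinite family $\{\phi_i\}_{i \geq 1} \subset \Inn(G)$ of pairwise independent loxodromic elements (that is, with pairwise disjoint fixed-point pairs on $\partial X$).

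Applying the Bestvina--Fujiwara counting construction to this independent family, in the form valid for acylindrical actions (Bestvina--Bromberg--Fujiwara, Hull--Osin), produces an infinite-dimensional space of homogeneous quasimorphisms on $\Aut(G)$ whose linear independence is detected by evaluation on the $\phi_i$. Since the $\phi_i$ all lie in $\Inn(G)$, the restriction map sends this space injectively into $\QH(\Inn(G))$, and Lemma~\ref{intro:lem:main} pulls it back to an infinite-dimensional space of $\Aut$-invariant homogeneous quasimorphisms on $G$. The main obstacle is securing a loxodromic element inside $\Inn(G)$; once that is in hand via the DGO normal subgroup theorem, everything else is a direct application of standard acylindrically-hyperbolic machinery together with the pullback lemma from the introduction.
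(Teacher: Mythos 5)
Your proposal is correct and follows essentially the same route as the paper: pull back along $\ad$ via Lemma~\ref{lem:main}, observe that $\Inn(G)\cong G/Z(G)$ is an infinite normal (hence $s$-normal) subgroup of the acylindrically hyperbolic group $\Aut(G)$, and run the Bestvina--Fujiwara machinery so that the witnessing elements lie in $\Inn(G)$; the paper simply packages your DGO-plus-ping-pong step as a direct citation of Osin's Lemma~7.1 (Theorem~\ref{thm:osin}), which already gives that the restricted action of $\Inn(G)$ is non-elementary. The only imprecision is that the linear independence of the Bestvina--Fujiwara quasimorphisms is detected not on your original independent family $\{\phi_i\}$ but on auxiliary chiral words $f_i$ built inside a Schottky subgroup $\langle \phi_1,\phi_2\rangle$ -- which still lies in $\Inn(G)$, so the argument goes through unchanged.
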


Thanks to the work of Genevois and Horbez \cite{auto:infend} on acylindrical hyperbolicity of automorphism groups (see also \cite{auto:oneend}), Theorem~\ref{intro:thm:ah} implies Items $1, 2$ and $3$ of Theorem \ref{intro:thm:main}. To prove Theorem~\ref{intro:thm:ah}, we first apply a result of Osin \cite[Lemma~7.1]{osin} that shows that when $\Aut(G)$ acts acylindrically and non-elementarily on a hyperbolic graph $X$, the restriction of this action to $\Inn(G) \cong G/Z(G)$ is also non-elementary, as soon as $\Inn(G)$ is infinite.
We then show that Bestvina and Fujiwara's construction of quasimorphisms is flexible enough to build homogeneous quasimorphisms on $\Aut(G)$ that are nonvanishing on $\Inn(G)$ (Theorem~\ref{thm:BF}), allowing us to apply Lemma~\ref{intro:lem:main}.   

For graph products, $\Aut(G)$ is acylindrically hyperbolic only in certain cases \cite[Theorem 1.5]{auto:graph}. However the description of the automorphism group of a graph product in \cite{auto:graph} (see also \cite{auto:cyclic}), allows us to prove that graph products admit $\Aut$-invariant homogeneous quasimorphisms more broadly (including Item $4$ of Theorem \ref{intro:thm:main}): see Theorem \ref{thm:graph_product} and Corollary \ref{cor:gp:fga}. \\

As a further application of Theorem \ref{intro:thm:ah}, we use \cite{bin} to obtain groups with an infinite-dimensional space of $\Aut$-invariant homogeneous quasimorphisms that have prescribed outer automorphism group:

\begin{corA}[Corollary~\ref{cor:bin}]
\label{intro:cor:bin}

For every countable group $Q$, there exists a finitely generated acylindrically hyperbolic group $G$ with an infinite-dimensional space of $\Aut$-invariant homogeneous quasimorphisms, such that $\Out(G) \cong Q$.
\end{corA}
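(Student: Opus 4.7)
The plan is to combine the construction from \cite{bin} with Theorem \ref{intro:thm:ah}. Namely, I would invoke \cite{bin} to obtain, for a given countable group $Q$, a finitely generated acylindrically hyperbolic group $G$ with $\Out(G) \cong Q$ and trivial center. The rest of the argument is then a verification that $G$ satisfies the hypotheses of Theorem \ref{intro:thm:ah}, namely that $G$ is not virtually central and that $\Aut(G)$ is acylindrically hyperbolic.

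The non-virtual centrality is immediate: any acylindrically hyperbolic group that is not virtually cyclic has finite center, and $G$ being finitely generated and non-elementary is infinite, so it cannot coincide virtually with its (trivial, or at worst finite) center. For acylindrical hyperbolicity of $\Aut(G)$, the key is that $Z(G)$ is trivial, so the map $G \to \Inn(G) \leq \Aut(G)$ is an isomorphism onto a normal subgroup. If $\phi \in \Aut(G)$ commutes with every inner automorphism, then for each $g \in G$ one has $\operatorname{conj}_{\phi(g)} = \operatorname{conj}_g$, hence $\phi(g)g^{-1} \in Z(G) = \{1\}$, forcing $\phi = \id$. Thus $\Inn(G)$ has trivial centralizer in $\Aut(G)$. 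Then I would apply Osin's result \cite{osin} that any group containing an acylindrically hyperbolic normal subgroup with finite centralizer is itself acylindrically hyperbolic, to conclude that $\Aut(G)$ is acylindrically hyperbolic. Theorem \ref{intro:thm:ah} now directly furnishes the desired infinite-dimensional space of $\Aut$-invariant homogeneous quasimorphisms on $G$.

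The main subtlety is therefore confirming that \cite{bin} really provides a group $G$ with \emph{trivial} center (rather than merely acylindrically hyperbolic with prescribed $\Out$). This is in fact typical for Rips-type constructions that realize countable groups as outer automorphism groups, since those constructions produce groups that are already quotients of small cancellation or similar groups with strong acylindrical properties and trivial center. If the construction in \cite{bin} does not directly yield trivial center, one can alternatively arrange this by a standard modification (e.g.\ by quotienting by the finite center, which in an acylindrically hyperbolic group preserves both acylindrical hyperbolicity and $\Out$ up to finite-index modifications, which suffices since $\Out$ is prescribed only up to isomorphism). Once trivial center is in place, the remainder of the argument is purely formal.
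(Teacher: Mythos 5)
Your overall strategy (obtain $G$ from \cite{bin}, then feed it into Theorem~\ref{intro:thm:ah}) matches the paper's, and your verification that $G$ is not virtually central is fine: a non-elementary acylindrically hyperbolic group is infinite and has finite center. The gap is in how you obtain acylindrical hyperbolicity of $\Aut(G)$. You invoke ``Osin's result that any group containing an acylindrically hyperbolic normal subgroup with finite centralizer is itself acylindrically hyperbolic,'' but no such theorem is available. Passing acylindrical hyperbolicity \emph{up} from a normal (even finite-index) subgroup is exactly the hard direction: if your claimed transfer principle were true, it would immediately imply that acylindrical hyperbolicity is a commensurability invariant (pass to the normal core $N$ of the finite-index subgroup; then $C_G(N)$ is finite because $C_G(N)\cap N=Z(N)$ is finite and $C_G(N)N/N$ embeds in the finite group $G/N$), which the paper itself records as a well-known open problem in Section~\ref{s:outlook}. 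It would likewise settle Genevois's Question~1.1 (whether $\Aut(G)$ is acylindrically hyperbolic for every finitely generated acylindrically hyperbolic $G$) for all centerless $G$; that question is also stated as open. The only result of Osin in this vicinity, Lemma~7.1 of \cite{osin} (Theorem~\ref{thm:osin} here), goes the opposite way: it passes non-elementarity \emph{down} to $s$-normal subgroups.

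The repair is that you do not need to derive acylindrical hyperbolicity of $\Aut(G)$ at all: \cite[Theorem~4.5]{bin} asserts, as part of its conclusion, the existence of a finitely generated acylindrically hyperbolic $G$ with $\Out(G)\cong Q$ \emph{and} with $\Aut(G)$ acylindrically hyperbolic; the latter is established by Sun's construction itself, not by a formal transfer from $G$. With that citation the proof is exactly one line: apply Theorem~\ref{thm:ah}. As a secondary point, your fallback for arranging trivial center (quotienting by the finite center) does not work as stated: this operation changes $\Out$ in a way you do not control, and in any case ``$\Out$ up to finite-index modification'' is not enough, since the statement requires $\Out(G)\cong Q$ on the nose.
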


In Section \ref{s:outlook} we discuss three directions for future research: extensions to acylindrically hyperbolic groups, with applications to commensurability invariance; the problem of classifying elements of $F_n$ with positive stable autocommutator length; and the existence of finitely generated groups with no $\Aut$-invariant quasimorphisms. \\

\textbf{Outline:} In Section \ref{s:qm} we review basic material on quasimorphisms and prove Lemma \ref{intro:lem:main}. Then in Section \ref{s:norms} we introduce $\Aut$-invariant word norms and stable autocommutator length, and prove Corollary \ref{intro:cor:wordnorm}. In Section \ref{s:ggt} we set up the necessary tools from geometric group theory. We prove our main results in Section \ref{s:proof}. In Section \ref{s:outlook} we discuss three directions for future research. \\

\textbf{Acknowledgements:} The authors wish to thank Benjamin Br{\"u}ck, Koji Fujiwara, Anthony Genevois, Camille Horbez, Morimichi Kawasaki, Micha{\l} Marcinkowski, Alessandro Sisto and Bin Sun for useful conversations. They also wish to thank the anonymous referee for several insightful comments. Wade is funded by The Royal Society through a University Research Fellowship.

\section{Quasimorphisms}
\label{s:qm}

We start by recalling basic notions around quasimorphisms.

We will denote by $Q(G)$ the space of quasimorphisms of $G$, and by $Q_h(G)$ the space of homogeneous quasimorphisms of $G$. These are contravariant functors: given a homomorphism $\phi : G \to H$, there is a linear \emph{pullback} $\phi^* : Q(H) \to Q(G)$, defined by $f \mapsto f \circ \phi$ , which restricts to a pullback $\phi^*:Q_h(H) \to Q_h(G)$. In the special case in which $\phi$ is an inclusion, $\phi^*$ is obtained by simply restricting quasimorphisms to a given subgroup, so it is accordingly referred to as the \emph{restriction} map. \\

Homogeneous quasimorphisms are relevant because they serve as representatives of quasimorphisms modulo bounded functions (which we denote by $\ell^\infty(G)$):

\begin{proposition}[{\cite[Section 2.2]{calegari}}]
\label{prop:homogenisation}

Every quasimorphism is at a bounded distance from a unique homogeneous quasimorphism, called its \emph{homogenisation}. Therefore, the map $Q(G) \to Q(G)/\ell^\infty(G)$ restricts to an isomorphism on $Q_h(G)$.
\end{proposition}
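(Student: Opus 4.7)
The strategy is the standard one for this classical result: I would explicitly construct the homogenisation by an averaging-in-the-limit procedure, verify the required properties, and then obtain uniqueness from the triviality of bounded homogeneous quasimorphisms.

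More concretely, given a quasimorphism $f \in Q(G)$, I would define
$$\bar{f}(g) := \lim_{n \to \infty} \frac{f(g^n)}{n}$$
for each $g \in G$. The first and most technical step is to show that this limit exists. Iterating the defect inequality $|f(xy) - f(x) - f(y)| \leq D(f)$ one obtains $|f(g^{n}) - n f(g)| \leq (n-1) D(f)$, and more generally $|f(g^{nm}) - n f(g^m)| \leq (n-1) D(f)$. Dividing by $nm$ and doing the analogous estimate with the roles of $n$ and $m$ swapped yields
$$\left| \frac{f(g^n)}{n} - \frac{f(g^m)}{m} \right| \leq D(f) \left( \frac{1}{n} + \frac{1}{m} \right),$$
so $(f(g^n)/n)_n$ is Cauchy; this is the Fekete-style argument and is the main (though not deep) obstacle in the proof.

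Once $\bar f$ is defined, I would verify the three properties. The estimate $|f(g^n) - n f(g)| \leq (n-1) D(f)$ passes to the limit to give $|\bar f(g) - f(g)| \leq D(f)$, so $\bar f$ lies at bounded distance from $f$; in particular $\bar f = f + (\bar f - f)$ is a quasimorphism, being the sum of a quasimorphism and a bounded function. Homogeneity is immediate from the definition for positive powers, $\bar f(g^k) = \lim_n f(g^{kn})/n = k \bar f(g)$ for $k \geq 0$, and is extended to negative powers using $|f(g^n) + f(g^{-n}) - f(e)| \leq D(f)$, which implies $\bar f(g^{-1}) = -\bar f(g)$.

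Finally, for uniqueness, suppose $h_1, h_2 \in Q_h(G)$ both lie at bounded distance from $f$. Then $h := h_1 - h_2$ is bounded and homogeneous; but if $h(g) \neq 0$ for some $g$, then $|h(g^n)| = n |h(g)| \to \infty$, contradicting boundedness, so $h \equiv 0$. This establishes the existence and uniqueness of the homogenisation, and the final statement follows: the composition $Q_h(G) \hookrightarrow Q(G) \twoheadrightarrow Q(G)/\ell^\infty(G)$ is surjective because every class contains its homogenisation, and injective because any homogeneous quasimorphism that is bounded vanishes identically by the same argument.
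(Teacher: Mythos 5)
Your proof is correct and is precisely the standard argument from the cited reference (Calegari, Section 2.2), which the paper itself does not reproduce: define $\bar f(g)=\lim_n f(g^n)/n$, establish the limit via the Fekete-style Cauchy estimate, check bounded distance, homogeneity, and uniqueness via the vanishing of bounded homogeneous quasimorphisms. No gaps; nothing further to add.
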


The following fact about homogeneous quasimorphisms will be crucial:

\begin{lemma}[{\cite[Section 2.2]{calegari}}]
\label{lem:conjinv}

Homogeneous quasimorphisms are conjugacy-invariant.
\end{lemma}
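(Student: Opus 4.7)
The plan is to fix $g, h \in G$ and show $f(hgh^{-1}) = f(g)$ by comparing the two sides along powers of $g$ and exploiting homogeneity to kill the defect in the limit.

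First I would observe that since $f$ restricts to a homomorphism on the trivial subgroup, $f(e) = 0$, and hence the defect inequality $|f(h) + f(h^{-1}) - f(e)| \le D(f)$ yields $|f(h) + f(h^{-1})| \le D(f)$. Next, the key identity is $(hgh^{-1})^n = hg^n h^{-1}$ for every $n \in \mathbb{Z}$. Applying the defect inequality twice to bracket $hg^n h^{-1}$ gives
$$|f(hg^n h^{-1}) - f(h) - f(g^n) - f(h^{-1})| \le 2 D(f),$$
and combining with the previous bound on $f(h) + f(h^{-1})$ produces
$$|f(hg^n h^{-1}) - f(g^n)| \le 3 D(f).$$

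Now I would invoke homogeneity on the two cyclic subgroups $\langle g \rangle$ and $\langle hgh^{-1}\rangle$: both $f(g^n) = n f(g)$ and $f((hgh^{-1})^n) = n f(hgh^{-1})$ hold. Substituting yields
$$|n f(hgh^{-1}) - n f(g)| \le 3 D(f),$$
so that $|f(hgh^{-1}) - f(g)| \le 3D(f)/n$ for every $n \ge 1$. Letting $n \to \infty$ gives the desired equality.

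The argument is essentially routine; the only genuine step is recognising that homogeneity along the cyclic subgroup generated by $hgh^{-1}$ (rather than along $g$ alone) lets one use powers to dilute the defect. No serious obstacle is expected, and the proof is self-contained given only the definitions of defect and homogeneity stated earlier in the section.
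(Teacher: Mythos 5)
Your proof is correct and is essentially the standard argument from the cited reference (Calegari, Section 2.2), which the paper itself does not reproduce: conjugate, bracket with the defect inequality, use $(hgh^{-1})^n = hg^nh^{-1}$ together with homogeneity on both cyclic subgroups, and let $n \to \infty$. No issues.
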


Therefore the action of $\Aut(G)$ on $Q_h(G)$ descends to an action of $\Out(G)$. In fact \cite{hase} states the $\Aut$-invariance as $\Out$-invariance. \\

We now prove Lemma \ref{intro:lem:main}, which we state in a more precise form here:

\begin{lemma}
\label{lem:main}

Let $G$ be a group, and let $\ad : G \to \Inn(G) \hookrightarrow \Aut(G)$ denote the natural map. Then for all $f \in Q_h(\Aut(G))$, the quasimorphism $\ad^* f \in Q_h(G)$ is $\Aut$-invariant. It is zero if and only if $f$ vanishes identically on $\Inn(G)$.
\end{lemma}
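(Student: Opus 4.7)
The plan is to unwind the definitions and reduce $\Aut$-invariance to the conjugacy-invariance of homogeneous quasimorphisms already recorded in Lemma~\ref{lem:conjinv}. First, I note that $\ad^* f$ is automatically a homogeneous quasimorphism because $\ad : G \to \Aut(G)$ is a homomorphism and pullback along homomorphisms preserves $Q_h$, as discussed at the beginning of this section.

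For $\Aut$-invariance, I would fix $\phi \in \Aut(G)$ and $g \in G$, and compute $\ad(\phi(g))$ in $\Aut(G)$. The key identity is
\[
\ad(\phi(g)) = \phi \circ \ad(g) \circ \phi^{-1},
\]
which is verified by evaluating both sides on an arbitrary $h \in G$: the right-hand side sends $h$ to $\phi(g\,\phi^{-1}(h)\,g^{-1}) = \phi(g)\,h\,\phi(g)^{-1}$, which equals $\ad(\phi(g))(h)$. In other words, $\ad(\phi(g))$ and $\ad(g)$ are conjugate elements of $\Aut(G)$. Since $f$ is a homogeneous quasimorphism on $\Aut(G)$, Lemma~\ref{lem:conjinv} implies $f(\ad(\phi(g))) = f(\ad(g))$, i.e.\ $(\ad^* f)(\phi(g)) = (\ad^* f)(g)$, which is precisely the $\Aut$-invariance of $\ad^* f$.

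For the final clause, by definition $\ad^* f(g) = f(\ad(g))$, so $\ad^* f \equiv 0$ on $G$ if and only if $f$ vanishes on the image of $\ad$, which is exactly $\Inn(G)$.

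There is no real obstacle here; the content of the lemma is just the observation that $\ad$ intertwines the $\Aut(G)$-action on $G$ with the $\Aut(G)$-action on $\Inn(G)$ by inner automorphisms, so that the invariance we want is converted to ordinary conjugacy-invariance inside $\Aut(G)$. The only thing to be careful about is writing the identity $\ad(\phi(g)) = \phi\,\ad(g)\,\phi^{-1}$ cleanly.
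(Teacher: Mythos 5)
Your proposal is correct and follows essentially the same route as the paper: both rest on the identity $\ad_{\phi(g)} = \phi \,\ad_g\, \phi^{-1}$ together with the conjugacy-invariance of homogeneous quasimorphisms (Lemma~\ref{lem:conjinv}), and the final clause is immediate since $\Inn(G)$ is the image of $\ad$. Your explicit verification of the conjugation identity is a minor addition the paper omits, but the argument is the same.
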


\begin{proof}
Let $\phi \in \Aut(G)$. We have
$$\ad^* f (\phi(g)) = f (\ad_{\phi(g)}) = f (\phi \ad_g \phi^{-1}) = f(\ad_g) = \ad^* f(g),$$
where we used the formula $\ad_{\phi(g)} = \phi \ad_g \phi^{-1}$, and the fact that homogeneous quasimorphisms are conjugacy-invariant (Lemma \ref{lem:conjinv}).

Thus $\ad^* f$ is $\Aut$-invariant, for every $f \in Q_h(\Aut(G))$. The last statement is obvious.
\end{proof}

\begin{remark}
Since $\Out(F_n)$ admits rich actions on natural hyperbolic graphs, $Q_h(\Out(F_n))$ is infinite-dimensional \cite[Corollary 4.30]{outfncomplex}, and therefore so is $Q_h(\Aut(F_n))$, by pulling back quasimorphisms. This is well-known; however all quasimorphisms obtained via this argument factor through $\Out(F_n)$, so they do not help in constructing $\Aut$-invariant homogeneous quasimorphisms on $F_n$.
\end{remark}

The following variation of Lemma~\ref{lem:main} will also be useful.

\begin{proposition}[cf. \cite{karlhofer:graph}, Lemma~2.9] \label{prop:char}
Let $G$ be a group and let $N \leq G$ be a characteristic subgroup with associated quotient map $\pi: G \to G/N$. Then $\pi^*: Q_h(G/N) \to Q_h(G)$ is injective and sends $\Aut(G/N)$-invariant homogeneous quasimorphisms to $\Aut(G)$-invariant homogeneous quasimorphisms.
\end{proposition}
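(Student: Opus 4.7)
The plan is to prove both assertions by a direct diagram chase, exploiting the fact that a characteristic subgroup produces a natural map $\Aut(G) \to \Aut(G/N)$ compatible with the quotient.

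For injectivity, since $\pi$ is surjective, any $f \in Q_h(G/N)$ satisfying $\pi^* f = f \circ \pi \equiv 0$ must vanish on all of $G/N$, so $f = 0$. This requires no homogeneity assumption and is immediate.

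For the invariance statement, the key observation is that because $N$ is characteristic in $G$, every $\phi \in \Aut(G)$ descends to an automorphism $\bar\phi \in \Aut(G/N)$ satisfying $\pi \circ \phi = \bar\phi \circ \pi$; this defines a group homomorphism $\Aut(G) \to \Aut(G/N)$, $\phi \mapsto \bar\phi$. Now given $f \in Q_h(G/N)$ that is $\Aut(G/N)$-invariant, I would compute, for any $\phi \in \Aut(G)$ and $g \in G$,
$$(\pi^* f)(\phi(g)) = f(\pi(\phi(g))) = f(\bar\phi(\pi(g))) = f(\pi(g)) = (\pi^* f)(g),$$
where the third equality uses the $\Aut(G/N)$-invariance of $f$ applied to $\bar\phi$. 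This shows $\pi^* f$ is $\Aut(G)$-invariant.

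There is no real obstacle here; the proposition is a formal consequence of $N$ being characteristic, which is precisely the condition ensuring that $\Aut(G)$ acts on $G/N$ through $\Aut(G/N)$. The only thing worth noting is that $\pi^*$ sends $Q_h(G/N)$ into $Q_h(G)$ (homogeneity is preserved because $\pi$ is a homomorphism and thus $\pi^* f(g^n) = f(\pi(g)^n) = n f(\pi(g)) = n \pi^* f(g)$), which was already recorded in Section~\ref{s:qm}.
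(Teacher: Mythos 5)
Your proof is correct and follows essentially the same route as the paper: injectivity from surjectivity of $\pi$, and invariance via the identity $\pi \circ \phi = \bar\phi \circ \pi$ for the induced automorphism $\bar\phi$, with the same chain of equalities. The extra remark that $\pi^*$ preserves homogeneity is a harmless addition already implicit in the functoriality discussion of Section~\ref{s:qm}.
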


\begin{proof}
Suppose $\phi \in \Aut(G)$ and $\bar{\phi}$ is the induced automorphism of $G/N$. If $g\in G$ then $\pi(\phi(g))=\bar{\phi}(\pi(g))$, so that if $f$ is $\Aut(G/N)$-invariant we have \[\pi^* f(\phi(g))=f(\pi(\phi(g)))=f(\bar{\phi}(\pi(g)))= f(\pi(g))=\pi^* f (g). \] Hence $\pi^*f$ is $\Aut(G)$-invariant. Injectivity follows from surjectivity of the quotient map $G \to G/N$.
\end{proof}

Let us close this section with a remark on dimensions:

\begin{remark}
The statements of our results give \emph{infinite-dimensional} spaces of homogeneous quasimorphisms. In much of the literature on the topic, the statements give \emph{continuum-dimensional} spaces of homogeneous quasimorphisms (see e.g. \cite{EF, fuj1, fuj2}). However, in most cases the two statements are equivalent. Indeed, for every group $G$, the space $Q_h(G)/H^1(G)$ is a Banach space, whose norm is defined by the defect \cite[Corollary 2.57]{calegari} (here we denote by $H^1(G)$ the space of homomorphisms $G \to \mathbb{R}$). It is a standard fact from functional analysis that an infinite-dimensional Banach space has uncountable dimension (see e.g. \cite[Exercise 3.8.2]{brezis}). Therefore, if $H^1(G)$ is finite-dimensional (for instance if $G$ is finitely generated), then $Q_h(G)$ has uncountable dimension as soon as it is infinite-dimensional. In fact, the Bestvina--Fujiwara construction (see Section \ref{s:ggt}) may be easily modified so that the quasimorphisms constructed are linearly independent also modulo homomorphisms \cite[Proof of Theorem 1]{BF}, so they automatically yield a space of homogeneous quasimorphisms of uncountable dimension.

This discussion also applies to $\Aut$-invariant homogeneous quasimorphisms, since $\Aut$-invariance is a closed condition, and therefore the space of $\Aut$-invariant homogeneous quasimorphisms modulo $\Aut$-invariant homomorphisms is again a Banach space with the defect norm.
\end{remark}

\section{$\Aut$-invariant norms}
\label{s:norms}

One of the main applications of quasimorphisms is in the study of length functions on groups. Most notably, quasimorphisms can be used to compute stable commutator length \cite{calegari}. In our setting, $\Aut$-invariant homogeneous quasimorphisms can be used to prove unboundedness of $\Aut$-invariant word norms, in the following sense:

\begin{definition}
\label{defi:wordnorm}
Let $G$ be a group and let $S \subset G$ (possibly infinite, possibly non-generating). We define
$$|g|_S := \min \{ k \mid g = s_1 \cdots s_k, s_i \in S \} \in \mathbb{N} \cup \{ \infty \}.$$
\end{definition}

$\Aut$-invariant word norms, that is word norms on $\Aut$-invariant generating sets, have been studied most notably on free and surface groups \cite{norm:free, rank2}, and on graph products of abelian groups \cite{marcinkowski:graph}. The $\Aut$-invariant generating sets are usually obtained by starting with a finite generating set and taking the closure under the action of $\Aut(G)$. In the case of free groups an example of an $\Aut$-invariant generating set is the set of primitive words, and in the case of surface groups one can take the set of simple loops \cite{rank2}. \\

The connection to $\Aut$-invariant homogeneous quasimorphisms is contained in the following proposition:

\begin{proposition}
\label{prop:wordnorm}

Let $S \subset G$ be a subset, and let $S^{\Aut}$ be the closure of $S$ under the action of $\Aut(G)$. Let $f \in Q_h(G)$ be an $\Aut$-invariant homogeneous quasimorphism. Then for all $g \in G$ we have
$$|f(g)| \leq \left( \sup\limits_{s \in S} |f(s)| + D(f) \right) \cdot |g|_{S^{Aut}}.$$
In particular, if $S$ is finite, and $G$ admits an $\Aut$-invariant homogeneous quasimorphism that is nontrivial on $\langle S^{Aut} \rangle$, then $|\cdot|_{S^{\Aut}}$ is unbounded on $\langle S^{Aut} \rangle$.
\end{proposition}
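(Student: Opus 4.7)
The plan is to establish the displayed inequality first, and then deduce unboundedness by exploiting homogeneity.

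The key observation is that $\Aut$-invariance of $f$ implies
\[
\sup_{t \in S^{\Aut}} |f(t)| = \sup_{s \in S} |f(s)|,
\]
since every $t \in S^{\Aut}$ is of the form $\phi(s)$ for some $\phi \in \Aut(G)$ and $s \in S$, and $f(\phi(s)) = f(s)$. So we can work directly with $S^{\Aut}$ and keep the constant $M := \sup_{s \in S} |f(s)|$ unchanged.

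If $|g|_{S^{\Aut}} = \infty$ the inequality is vacuous; otherwise, set $k := |g|_{S^{\Aut}}$ and write $g = t_1 \cdots t_k$ with $t_i \in S^{\Aut}$. Iterating the defect bound $|f(ab)| \leq |f(a)| + |f(b)| + D(f)$, a routine induction on $k$ yields
\[
|f(g)| \;\leq\; \sum_{i=1}^{k} |f(t_i)| + (k-1)\, D(f) \;\leq\; kM + (k-1)\, D(f) \;\leq\; k\bigl(M + D(f)\bigr),
\]
which is the stated inequality.

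For the second assertion, assume $S$ is finite, so $M < \infty$, and pick an $\Aut$-invariant $f \in Q_h(G)$ with $f(g_0) \neq 0$ for some $g_0 \in \langle S^{\Aut}\rangle$. Homogeneity gives $|f(g_0^n)| = n|f(g_0)| \to \infty$, while the inequality just proved forces $|g_0^n|_{S^{\Aut}} \geq |f(g_0^n)|/(M + D(f)) \to \infty$, so $|\cdot|_{S^{\Aut}}$ is unbounded on $\langle S^{\Aut}\rangle$. No step here is a serious obstacle; the only thing to be careful about is the reduction of the supremum over $S^{\Aut}$ to the supremum over $S$, which is precisely where $\Aut$-invariance of $f$ is used.
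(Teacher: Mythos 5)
Your proof is correct and follows essentially the same route as the paper's: the same telescoping of the defect bound over a minimal-length factorization, with $\Aut$-invariance used to replace $\sup_{t\in S^{\Aut}}|f(t)|$ by $\sup_{s\in S}|f(s)|$, and homogeneity giving unboundedness in the second part. The only cosmetic difference is that the paper writes the factors explicitly as $\phi_i(s_i)$ and applies invariance termwise, whereas you package this as a supremum identity; the content is identical.
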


\begin{proof}
The conclusion is obvious if $|g|_{S^{Aut}} = \infty$. Otherwise, let $|g|_{S^{Aut}} = k$, and write $g = \phi_1(s_1) \cdots \phi_k(s_k)$, where $\phi_i \in \Aut(G)$ and $s_i \in S$. Then
\begin{align*}
    |f(g)| &= \left| f\left( \prod\limits_{i = 1}^k \phi_i(s_i) \right) \right| \leq \left| \sum\limits_{i = 1}^k f(\phi_i(s_i)) \right| + (k-1)D(f) \\
    &\leq k \sup\limits_{s \in S} |f(s)| + (k-1) D(f) \leq \left( \sup\limits_{s \in S} |f(s)| + D(f) \right) \cdot |g|_{S^{Aut}}.
\end{align*}
Any homogeneous quasimorphism on $G$ that is nontrivial on $\langle S^{Aut} \rangle$ is unbounded on $\langle S^{Aut} \rangle$, so the result follows.
\end{proof}

Another $\Aut$-invariant word norm uses the following set of elements:

\begin{definition}
\label{deif:autocommutator}

Let $G$ be a group. An \emph{autocommutator} is an expression of the form $[\phi, g] := \phi(g) g^{-1}$, where $g \in G, \phi \in \Aut(G)$. The \emph{autocommutator subgroup} is the group generated by all autocommutators, and is denoted by $[\Aut(G), G]$.
\end{definition}

This notion is commonly attributed to Hegarty after \cite{autocommutators2, autocommutators3}, although it was already considered by Miller with the name \emph{automorphism commutator} \cite{autocommutators1}. Note that $[G, G] \leq [\Aut(G), G]$, but in many cases $[\Aut(G), G]$ is much larger. For instance, if $G = F_n$ is a non-abelian free group with basis $\langle a_1, \ldots, a_n \rangle$, then the autocommutator subgroup is equal to the whole group. To see this, it suffices to notice that $a_1 = [\phi, a_2]$, where $\phi \in \Aut(F_n)$ is any automorphism that sends $a_2$ to $a_1 a_2$.

\begin{definition}
\label{defi:aut:scl}
The \emph{autocommutator length}, denoted $\acl$ (or $\acl_G$) is the word norm on autocommutators, in the sense of Definition \ref{defi:wordnorm}. We define $\acl(g) = \infty$ if $g \notin [\Aut(G), G]$.

The \emph{stable autocommutator length}, denoted $\sacl$ (or $\sacl_G$), is the stabilization of autocommutator length. More precisely, we define $\sacl(g) := \lim\limits_{n \to \infty} \frac{\acl(g^n)}{n} \in \mathbb{R}_{\geq 0}$ in case $g \in [\Aut(G), G]$; $\sacl(g) = \frac{1}{n} \sacl(g^n)$ in case $g^n \in [\Aut(G), G]$ for some $n \geq 1$; and $\sacl(g) = \infty$ in case $g^n \notin [\Aut(G), G]$ for all $n \geq 1$.
\end{definition}

A related notion is that of \emph{mixed stable commutator length}, introduced in \cite{mixed:scl1}. This is defined for a group $G$ and a normal subgroup $N$, as the stable word length in terms of commutators of the form $[g, n], g \in G, n \in N$, and is denoted $\scl_{G, N}$ (this is extended to a, possibly infinite-valued, norm on all of $G$ using the same conventions as in Definition \ref{defi:aut:scl}). This quantity was further studied in \cite{mixed:scl2, mixed:scl3, mixed:scl4}; see \cite{survey} for a survey.

In \cite{karlhofer:free, karlhofer:graph}, the author uses mixed stable commutator length to define $\Aut$-invariant stable commutator length. This is simply the mixed stable commutator length $\scl_{\Aut(G), \Inn(G)}$, under the assumption that $G = \Inn(G)$. It is easy to see that in this case, a commutator $[\Aut(G), \Inn(G)]$ coincides with an autocommutator under the identification of $G$ with $\Inn(G)$ via $\ad$. Therefore stable autocommutator length gives an alternative definition of $\Aut$-invariant stable commutator length, that does not require any assumptions on the center.

There is however a way to describe stable autocommutator length as a mixed stable commutator length. Namely, let $G$ be a group, and consider the semidirect product $G \rtimes \Aut(G)$. Then $\sacl$ on $G$ coincides with the restriction of $\scl_{G \rtimes \Aut(G), G}$ to $G$. Definition \ref{defi:aut:scl} has the advantage of being more intrinsic. \\

A further use of stable autocommutator length is that it gives a lower bound to all mixed stable commutator lengths:

\begin{lemma}
\label{lem:mixed}

Let $G$ be a group and $N$ a normal subgroup. Then $\sacl_N(n) \leq \scl_{G, N}(n)$ for all $n \in [G, N]$.
\end{lemma}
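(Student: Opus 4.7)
The plan is to compare the two generating sets directly, by exhibiting every mixed commutator as an autocommutator. Since $N \trianglelefteq G$, conjugation defines a homomorphism $c : G \to \Aut(N)$ with $c_g(n) = gng^{-1}$. The key identity is then
$$[g, n] = gng^{-1}n^{-1} = c_g(n) n^{-1} = [c_g, n]_{\mathrm{aut}},$$
so every mixed commutator of an element of $G$ with an element of $N$ is literally an autocommutator in $N$. In particular $[G, N] \subseteq [\Aut(N), N]$, which ensures that both sides of the claimed inequality are defined (possibly as $\infty$) under the hypothesis $n \in [G, N]$.

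Next I would upgrade the identity to an inequality of unstable lengths. Given any expression $n = \prod_{i=1}^{k} [g_i, m_i]$ as a product of mixed commutators of length $k$, applying the identity factor by factor yields
$$n = \prod_{i=1}^{k} [c_{g_i}, m_i]_{\mathrm{aut}},$$
an autocommutator decomposition of $n$ in $N$ of the same length. Thus if we write $c_{G,N}$ for the (unstable) mixed commutator length, we obtain $\acl_N(n) \leq c_{G,N}(n)$ for every $n \in [G, N]$.

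Finally I would stabilize: applying the previous inequality to $n^k$ for each $k \geq 1$, dividing by $k$, and letting $k \to \infty$ gives $\sacl_N(n) \leq \scl_{G,N}(n)$ in the case $n \in [G, N]$, and the convention for taking roots in Definition \ref{defi:aut:scl} handles the case where only a power of $n$ lies in $[G, N]$ identically, so the argument extends verbatim.

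There is no substantive obstacle: the entire content of the lemma is the identity $[g, n] = [c_g, n]_{\mathrm{aut}}$, together with the fact that both $\acl$ and $c_{G,N}$ are word norms on their respective generating sets, so an inclusion of generating sets gives the stated inequality. The only point that requires a small amount of bookkeeping is checking that the conventions extending $\sacl_N$ and $\scl_{G,N}$ to elements outside $[\Aut(N), N]$ and $[G, N]$ respectively agree on $[G, N]$, which is immediate from the inclusion $[G, N] \subseteq [\Aut(N), N]$ derived above.
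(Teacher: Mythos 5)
Your proof is correct and is essentially the paper's own argument: the paper's one-line proof is exactly the observation that $[g,m]=[\phi,m]$ where $\phi\in\Aut(N)$ is conjugation by $g$, and your expansion of this identity to products of commutators and the subsequent stabilization is just the bookkeeping that the paper leaves implicit.
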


\begin{proof}
It suffices to notice that $[g, m] = [\phi, m]$, where $\phi \in \Aut(N)$ is the automorphism defined by conjugation by $g$.
\end{proof}

In particular, whenever stable autocommutator length on $N$ is unbounded, so is the mixed stable commutator length $\scl_{G, N}$, for any group $G$ that contains $N$ as a normal subgroup.

Finally, we show the analogue of Proposition \ref{prop:wordnorm} in this case:

\begin{proposition}
\label{prop:aut:scl}

Let $f$ be an $\Aut$-invariant homogeneous quasimorphism. Then for all $g \in G$ we have:
$$|f(g)| \leq 2 D(f) \cdot \sacl(g).$$
\end{proposition}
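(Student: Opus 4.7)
The plan is to mimic the classical Bavard-type bound $|f(g)| \leq 2 D(f) \cdot \scl(g)$ for homogeneous quasimorphisms, replacing commutators with autocommutators. The key input that makes the argument work is precisely the combination of $\Aut$-invariance and homogeneity: these force $f$ to vanish ``up to $D(f)$'' on every autocommutator.

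The first step, which I expect to be the conceptual heart of the argument, is to bound $f$ on a single autocommutator. For $c = [\phi, g] = \phi(g) g^{-1}$, I apply the defect inequality to the product $\phi(g) \cdot g^{-1}$ to obtain
$$|f([\phi, g]) - f(\phi(g)) - f(g^{-1})| \leq D(f).$$
Now $\Aut$-invariance of $f$ gives $f(\phi(g)) = f(g)$, and homogeneity gives $f(g^{-1}) = -f(g)$, so the two middle terms cancel and $|f(c)| \leq D(f)$.

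The second step is routine: if $h \in [\Aut(G), G]$ is written as a product of $k$ autocommutators $h = c_1 \cdots c_k$, then iterating the defect inequality bounds $|f(h)|$ by $\sum_i |f(c_i)| + (k-1)D(f) \leq kD(f) + (k-1)D(f) \leq 2k D(f)$. Minimising over such factorisations yields $|f(h)| \leq 2 D(f) \cdot \acl(h)$ for every $h \in [\Aut(G), G]$.

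For the third step I pass to the stable version. If $g^n \in [\Aut(G), G]$ for some $n \geq 1$, then by homogeneity $|f(g^n)| = n|f(g)|$, so the previous bound applied to $g^n$ gives
$$|f(g)| = \frac{|f(g^n)|}{n} \leq 2 D(f) \cdot \frac{\acl(g^n)}{n}.$$
Letting $n \to \infty$ along a sequence of powers for which $g^n$ lies in $[\Aut(G), G]$, and using the definition of $\sacl$ in Definition~\ref{defi:aut:scl}, one obtains $|f(g)| \leq 2 D(f) \cdot \sacl(g)$. Finally, if no power of $g$ lies in $[\Aut(G), G]$ then $\sacl(g) = \infty$ by convention, and the inequality is trivially true. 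The real content of the proposition is therefore entirely concentrated in the first step; the remainder is the same bookkeeping that underlies the classical Bavard estimate.
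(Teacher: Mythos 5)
Your proof is correct and follows essentially the same route as the paper: the key estimate $|f([\phi,g])|\leq D(f)$ via $\Aut$-invariance and homogeneity, then the word-length bookkeeping, then stabilisation. The only cosmetic difference is that you iterate the defect inequality directly, whereas the paper invokes Proposition~\ref{prop:wordnorm} (which requires the extra observation that the set of autocommutators is closed under $\Aut(G)$); the content is the same.
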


\begin{proof}
We start by noticing that
$$|f([\phi, g])| = |f(\phi(g) \cdot g^{-1})| \leq |f(\phi(g)) + f(g^{-1})| + D(f) = D(f).$$
Now if $S$ is the set of autocommutators in $G$, then $S = S^{\Aut}$, by the following formula:
$$\psi([\phi, g]) = \psi \phi(g) \cdot \psi(g)^{-1} = [\psi \phi \psi^{-1}, \psi(g)].$$
Thus, Proposition \ref{prop:wordnorm} yields $|f(g)| \leq 2 D(f) \cdot \acl(g)$. Using homogeneity of $f$ and stabilizing $\acl(g)$, we conclude.
\end{proof}

\begin{corollary}
\label{cor:aut:scl}
    Let $G$ be a finitely generated group, and suppose that there exists an infinite-dimensional space of $\Aut$-invariant homogeneous quasimorphisms on $G$. Then $\sacl$ is finite and unbounded on $[G, G]$.
\end{corollary}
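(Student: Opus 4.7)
The corollary splits into two claims, and both should fall out quickly from material already set up. For finiteness, the point is that every commutator is already an autocommutator, via the identity
$$[g,h] = (ghg^{-1})h^{-1} = \ad_g(h) \cdot h^{-1} = [\ad_g, h].$$
Thus $[G,G] \subseteq [\Aut(G), G]$ (a fact noted after Definition \ref{deif:autocommutator}), and for $g \in [G,G]$ we immediately get $\acl(g)$ bounded above by the ordinary commutator length of $g$, so $\sacl(g) \leq \acl(g) < \infty$.

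For unboundedness I will argue by contradiction. Suppose $\sacl$ is bounded by some constant $M$ on $[G,G]$. By Proposition \ref{prop:aut:scl}, every $\Aut$-invariant homogeneous quasimorphism $f$ on $G$ then satisfies $|f(g)| \leq 2 D(f) M$ for all $g \in [G,G]$. Homogeneity, via $|f(g^n)| = n|f(g)|$, forces a bounded homogeneous quasimorphism on a subgroup to vanish there; so $f$ vanishes identically on $[G,G]$. Hence $f$ descends to a homogeneous quasimorphism on the abelianization $G/[G,G]$, and since every homogeneous quasimorphism on an abelian group is a homomorphism, $f$ is itself a homomorphism $G \to \mathbb{R}$.

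It follows that under our boundedness assumption, the entire space of $\Aut$-invariant homogeneous quasimorphisms would embed into $\Hom(G,\mathbb{R})$. Because $G$ is finitely generated, the abelianization $G/[G,G]$ has finite torsion-free rank, so $\Hom(G,\mathbb{R})$ is finite-dimensional, contradicting the infinite-dimensional hypothesis. I do not foresee a serious obstacle here: Proposition \ref{prop:aut:scl} already carries the analytic content, and the remaining ingredients are the standard vanishing-from-boundedness principle for homogeneous quasimorphisms and the finite-dimensionality of $H^1(G;\mathbb{R})$ for a finitely generated group.
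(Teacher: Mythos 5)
Your proposal is correct and is essentially the paper's argument run in contrapositive form: both proofs rest on Proposition~\ref{prop:aut:scl}, the finite-dimensionality of $\Hom(G,\mathbb{R})$ for finitely generated $G$, and the fact that a homogeneous quasimorphism vanishing on $[G,G]$ is a homomorphism (the paper cites \cite[Lemma 2.24]{calegari} for this, whereas you derive it by descending to the abelianization --- which works, since homogeneity forces such an $f$ to be literally constant on cosets of $[G,G]$). The paper simply argues directly: pick an $\Aut$-invariant homogeneous quasimorphism that is not a homomorphism, find $g\in[G,G]$ with $f(g)\neq 0$, and let $\sacl(g^n)\to\infty$.
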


\begin{proof}
    The fact that $\sacl$ takes only finite values on $[G, G]$ is immediate from the inclusion $[G, G] \leq [\Aut(G), G]$. Since $G$ is finitely generated, it has a finite-dimensional space of homomorphisms. Therefore there exists an $\Aut$-invariant homogeneous quasimorphism $f$ that does not vanish identically on $[G, G]$ (see \cite[Lemma 2.24]{calegari}). Let $g \in [G, G]$ be such that $f(g) \neq 0$. Then Proposition \ref{prop:aut:scl} implies that $\sacl(g^n) \to \infty$ as $n \to \infty$.
\end{proof}

In fact, using the interpretation of stable autocommutator length in terms of mixed stable commutator length on $G \rtimes \Aut(G)$, something much stronger can be shown:

\begin{theorem}[Bavard duality for $\sacl$ \cite{mixed:scl2}]
\label{thm:bavard}

Let $G$ be a group, and let $g \in [\Aut(G), G]$. Then
$$\sacl(g) = \sup \frac{|f(g)|}{2D(f)},$$
where the supremum runs over all $\Aut$-invariant homogeneous quasimorphisms $f$ of positive defect. \qed
\end{theorem}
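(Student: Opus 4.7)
The plan is to deduce this formula from the Bavard duality for mixed stable commutator length established in \cite{mixed:scl2}, using the identification of $\sacl$ with a mixed scl that was already noted earlier in this section. Concretely, let $H := G \rtimes \Aut(G)$ and view $N := G$ as the normal factor. The earlier paragraph records that $\sacl_G(g) = \scl_{H, N}(g)$ for every $g \in G$; I plan to supplement this with a matching of the invariant quasimorphism classes appearing on the right-hand sides, and then invoke the cited mixed Bavard duality to conclude.

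First I would dispose of the easy direction, which requires no semidirect-product trick at all: Proposition~\ref{prop:aut:scl} immediately gives $|f(g)|/(2D(f)) \leq \sacl(g)$ for every $\Aut$-invariant homogeneous quasimorphism $f$ of positive defect, so $\sup_{f} |f(g)|/(2 D(f)) \leq \sacl(g)$. For the reverse inequality, the mixed Bavard duality of \cite{mixed:scl2} yields
\[
\scl_{H,N}(g) \;=\; \sup_{f} \frac{|f(g)|}{2 D(f)},
\]
where the supremum runs over homogeneous quasimorphisms $f$ on $N$ of positive defect that are invariant under the conjugation action of $H$ on $N$. The critical step is then to argue that this supremum coincides with the one in our statement: conjugation by elements $(g_0, 1)$ is inner on $N$ and is therefore automatic by Lemma~\ref{lem:conjinv}, while conjugation by $(1, \phi)$ acts on $N$ as the automorphism $\phi$ itself, so $H$-invariance of $f$ reduces precisely to $\Aut(G)$-invariance.

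As a side check I would verify that the two hypotheses on $g$ match. A direct computation in $H$ gives $[(1, \phi), (g, 1)] = ([\phi, g], 1)$, so every autocommutator lies in $[H, N]$, and conversely $[(g_0, \phi), (g, 1)] = ([\ad_{g_0} \circ \phi, g], 1)$ is itself an autocommutator; hence $[\Aut(G), G] = [H, N] \cap N$. The main obstacle is not mathematical but notational: one must pin down the conventions for the semidirect-product multiplication and for the commutator orientation used in \cite{mixed:scl2} so that $\sacl_G$ and $\scl_{H,N}|_G$ agree on the nose and so that $H$-invariance of a quasimorphism on $N$ translates precisely to $\Aut$-invariance in our sense. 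Once that dictionary is fixed, the cited Bavard duality supplies the rest.
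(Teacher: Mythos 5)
Your proposal is correct and follows essentially the same route as the paper, which states Theorem~\ref{thm:bavard} with a \qed, relying on the identification $\sacl_G = \scl_{G \rtimes \Aut(G),\, G}\big|_G$ recorded in the paragraph preceding it and on the Bavard duality for mixed $\scl$ from \cite{mixed:scl2}. You have merely made explicit the bookkeeping (matching autocommutators with mixed commutators in $G \rtimes \Aut(G)$, and matching $H$-conjugation-invariance with $\Aut(G)$-invariance via Lemma~\ref{lem:conjinv}) that the paper leaves implicit.
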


Corollary \ref{cor:aut:scl} shows how Corollary \ref{intro:cor:wordnorm} follows from Theorem \ref{intro:thm:main}. For groups as in Theorem \ref{intro:thm:main}, it would be interesting to gain more precise information on $\sacl$ than just unboundedness. For instance, for which elements does $\sacl$ vanish (this is discussed more in Section \ref{s:dist})? Is there a spectral gap in $\sacl$? Is $\sacl$ rational? What is the spectrum of $\sacl$ over finitely presented groups? See \cite{heuer, whatis} for a survey of these questions for $\scl$.

\section{The Bestvina--Fujiwara machinery}
\label{s:ggt}

\begin{remark}
    The arguments in this section rely on \cite{BF}. Therefore we will follow the notation therein, and use $h$ to denote quasimorphisms, and $f$ to denote a WPD element. We will get back to our standard notation, using $f$ for quasimorphisms, starting from the next section.
\end{remark}

A group action on a space $X$ is \emph{acylindrical} if for all $R \geq 0$ there exist $N >0, L >0$ such that if two points $x,y \in X$ are at distance at least $L$ then the set \[ \{g \in G : d(x,gx), d(y,gy) \leq R \} \] has size at most $N$. Roughly speaking, coarse stabilizers of points sufficiently far apart have at most $N$ elements in common. A group $G$ is \emph{acylindrically hyperbolic} it admits a non-elementary acylindrical action on some geodesic (Gromov-)hyperbolic metric space $X$ \cite{osin}. In this case, one can also take $X$ to be a graph (more precisely, a Cayley graph of $G$ with respect to an infinite generating set \cite[Theorem~1.2]{osin}).  

We say that an element $f \in G$ is \emph{WPD} (weakly properly discontinuous) for the action of $G$ on $X$ if for every $R \geq 0$ and every $x \in X$ there exists $n$ such that the set \[  \{g\in G : d(x,gx), d(f^nx,gf^nx) \leq R \} \] is finite. We say an action of $G$ on a hyperbolic space $X$ is WPD if every loxodromic element is WPD. An element $f \in G$ is \emph{achiral} if $f^k$ is conjugate to $f^{-k}$ for some $k$, and \emph{chiral} otherwise. If $f$ is loxodromic for a WPD action, chirality is a condition that ensures conjugation cannot flip the quasi-axis of $f$.

When $f$ is acting as a loxodromic element on a hyperbolic metric space, the WPD condition says that the action looks acylindrical `along the axis of $f$.' In particular, acylindrical actions are also WPD. Furthermore, if the action of $G$ on $X$ is non-elementary and contains a WPD element then $G$ is acylindrically hyperbolic (this is far from trivial and was first proved in \cite{DGO}, see \cite{spinning} for a more recent approach).

Bestvina and Fujiwara showed that if a group $G$ admits a non-elementary action on a hyperbolic graph with a WPD element, then $G$ has many homogeneous quasimorphisms \cite{BF}. They do this by showing that if $f$ is a chiral loxodromic WPD element, Brooks' construction can be `quasified' to produce a quasimorphism that is unbounded on $\langle f \rangle$ (note that if $f$ is achiral, then every homogeneous quasimorphism vanishes on $f$).
To find spaces of quasimorphisms, they produce many chiral loxodoromic elements whose associated quasimorphisms are suitably independent. We observe below that their constuction affords a great deal of freedom when it comes to controlling the supports of these quasimorphisms. Due to reliance on the actual proofs in \cite{BF}, our proof is far from self contained and we recommend the reader has \cite{BF} at hand while reading it.


\begin{theorem}
\label{thm:BF}
Suppose that the action of a group $G$ on a hyperbolic graph $X$ is non-elementary and WPD. \begin{enumerate} \item If $f \in G$ is a chrial loxodromic element for the action on $X$, then there exists a homogeneous quasimorphism $h \colon G \to \mathbb{R}$ such that $h(f) \neq 0$. \item If $H \leq G$ is non-elementary with respect to the action on $X$, then the restriction $Q_h(G) \to Q_h(H)$ has an infinite-dimensional image.\end{enumerate}
\end{theorem}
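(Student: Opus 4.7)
For part (1), the plan is to apply Bestvina--Fujiwara's construction essentially verbatim. Fix a basepoint $x_0 \in X$ and choose a sufficiently long orbit segment $w$ along the quasi-axis of $f$. I would define a counting quasimorphism $c_w \colon G \to \mathbb{R}$ that counts (approximate) copies of $w$ minus those of $w^{-1}$ along geodesics from $x_0$ to $g x_0$, and take its homogenization $h$. The WPD condition on $f$ controls the defect via the standard BF estimate that only boundedly many translates of $w$ can fellow-travel a fixed segment. Chirality then rules out cancellation on $f^n$: copies of $w$ appear a linear number of times along the orbit $(x_0, f x_0, \ldots, f^n x_0)$, while WPD plus chirality bound the copies of $w^{-1}$ along the same orbit. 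Hence $h(f) \neq 0$.

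For part (2), the strategy is to produce infinitely many chiral loxodromic elements of $H$ whose Bestvina--Fujiwara quasimorphisms, defined on all of $G$ via part (1), remain linearly independent upon restriction to $H$. Since $H$ acts non-elementarily on $X$, a classical ping-pong argument inside $H$ produces two loxodromic elements $a, b \in H$ with pairwise disjoint fixed point pairs on $\partial X$. For $n$ large, set $f_n := a b^n$; North--South dynamics show each $f_n$ is loxodromic with axis close to the broken geodesic obtained by concatenating the axis of $b$ with its $a$-translate, and that the axes of distinct $f_n$ diverge. By replacing with higher powers or products if necessary, I would secure an infinite sequence of chiral loxodromic elements of $H$ that are pairwise inequivalent in BF's sense.

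For each such $f_n$, part (1) applied to $G$ yields a homogeneous quasimorphism $h_n \in Q_h(G)$ with $h_n(f_n) \neq 0$. Following BF's independence argument, I would choose the orbit segment defining $h_n$ to lie on a stretch of the axis of $f_n$ that is far from every axis of $f_m$ with $m < n$; this ensures $h_n(f_m) = 0$ for all $m < n$. Consequently the matrix $\bigl(h_n(f_m)\bigr)_{n,m \geq 1}$ is lower-triangular with nonzero diagonal, so the restrictions $h_n|_H$ are linearly independent in $Q_h(H)$, and the restriction map $Q_h(G) \to Q_h(H)$ has infinite-dimensional image.

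The hard part will be producing chiral loxodromic $\{f_n\} \subset H$ that are BF-inequivalent with respect to the action of the \emph{ambient} group $G$, not just of $H$: one must rule out that some nonzero powers of $f_n$ and $f_m$ are $G$-conjugate for $n \neq m$, and that $f_n^k$ is $G$-conjugate to $f_n^{-k}$. These should follow from the WPD hypothesis on $G$'s action, which bounds coarse stabilizers of long axis segments, combined with the ping-pong dynamics; but controlling equivalences and symmetries through the larger group $G$ is where the most care is needed.
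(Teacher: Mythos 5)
Your proposal follows essentially the same route as the paper: both part (1) and part (2) are reruns of the Bestvina--Fujiwara machinery, with the two key observations being that the independent, $G$-inequivalent chiral loxodromics can be produced inside the non-elementary subgroup $H$, and that the resulting quasimorphisms on $G$ stay linearly independent upon restriction because the independence is witnessed on the subgroup the $f_i$ generate. The ``hard part'' you flag---inequivalence through the ambient group $G$---is exactly what the paper delegates to Proposition~6(5) of Bestvina--Fujiwara, which uses WPD to show a free subgroup of loxodromics contains elements no positive powers of which are $G$-conjugate, so your instinct about where the care is needed matches the paper's citation.
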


Roughly speaking, Theorem~\ref{thm:BF} tells us that we can build many homogeneous quasimorphisms on $G$ that do not vanish on $H$.

\begin{proof}
  Following \cite{BF}, given loxodromic elements $g_1, g_2$ we say that $g_1 \sim g_2$ if some positive powers of $g_1$ and $g_2$ are conjugate in $G$ (this is equivalent to the initial definition of $\sim$ in \cite{BF} by Part (4) of Proposition 6 in the same paper). An element $f\in G$ is chiral if and only if $f \not \sim f^{-1}$. In Proposition 5 of Section 2 of \cite{BF}, it is shown that for any chiral loxodromic element $f$, there exists a quasimorphism $h_{f^a}:G\to \mathbb{R}$ such that $h_{f^a}(f^n)$ grows linearly with $n$.
  Its homogenisation is then nonzero on $\langle f\rangle $ (to be slightly more precise in our application of \cite[Proposition~5]{BF}, after replacing $f$ with a power, we can assume $f$ is a basis element of a free Schottky subgroup $F<G$, so will be cyclically reduced in $F$).

  For the second assertion, Part (5) of \cite[Proposition~6]{BF} explains how any free subgroup $F \leq G$ consisting entirely of loxodromic elements has $g_1, g_2 \in F$ such that $g_1 \not\sim g_2$ and $g_1$, $g_2$ are independent (roughly speaking, an infinite sequence of loxodromic elements whose axes all have a sufficiently long overlap must contain infinitely many conjugacy classes in $G$ because of weak proper discontinuity).  As the action of $H$ on $X$ is non-elementary, we may take $F$ to be a subgroup of $H$ and therefore obtain loxodromic elements $g_1, g_2 \in H$ such that $g_1 \not\sim g_2$.

Given that $g_1, g_2$ are independent loxodromics with $g_1 \not\sim g_2$, in Section 2 of \cite{BF}, Bestvina and Fujiwara use a variation on Brooks' construction to find a sequence $f_1, f_2, \ldots$ of chiral elements in $\langle g_1, g_2 \rangle$ and quasimorphisms $h_1, h_2, \ldots \in Q(G)$ such that  each $h_i$ is unbounded on $\langle f_i \rangle$ and vanishes on $\langle f_1, \ldots, f_{i-1} \rangle$ \cite[Proof of Theorem~1]{BF}. This implies that any nontrivial linear combination $\sum a_ih_i$ is unbounded, and therefore the homogenisations of the $h_i$ (Proposition \ref{prop:homogenisation}) are linearly independent in $Q_h(G)$. Moreover, the linear independence of the $h_i$ is witnessed already by restricting to the group generated by the $f_i$, so that all of these quasimorphisms remain nontrivial under the restriction map 
 $Q_h(G) \to Q_h(H)$.
\end{proof}

A subgroup $N$ of a group $G$ is called \emph{$s$-normal} if every intersection $N \cap gNg^{-1}$ of $N$ with a conjugate of itself is infinite. The simplest examples of $s$-normal subgroups are infinite normal subgroups.

\begin{theorem}[\cite{osin}, Lemma~7.1] \label{thm:osin}
Let $G$ be a group acting acylindrically and non-elementarily on a hyperbolic space
$X$. Then every $s$-normal subgroup of G acts non-elementarily.
\end{theorem}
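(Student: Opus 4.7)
The plan is to argue by contradiction using Osin's trichotomy for acylindrical actions on hyperbolic spaces: any subgroup of $G$ is either elliptic (bounded orbits), lineal loxodromic (virtually cyclic, fixing an unordered pair $\{\xi^+, \xi^-\} \subset \partial X$ setwise), or non-elementary. So, assuming $N$ is $s$-normal but fails to act non-elementarily, I would split into the two elementary cases.

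In the elliptic case, fix a basepoint $x \in X$ and a radius $R > 0$ with $d(x, nx) \leq R$ for every $n \in N$. For any $g \in G$ and any $m = gng^{-1} \in gNg^{-1}$ one has $d(gx, m \cdot gx) = d(x, nx) \leq R$, so every element $h \in N \cap gNg^{-1}$ moves both $x$ and $gx$ by at most $R$. Since $G$ acts non-elementarily its orbits are unbounded, so I can pick $g$ with $d(x, gx) \geq L(R)$, where $L(R)$ is the acylindricity constant associated to $R$. Acylindricity then bounds the set of such $h$ by $N(R) < \infty$, contradicting $|N \cap gNg^{-1}| = \infty$.

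In the lineal case, $N$ is virtually cyclic, so its torsion subgroup is finite and every infinite subgroup of $N$ contains an element of infinite order, which under the action is loxodromic. Applied to $N \cap gNg^{-1}$, this yields a loxodromic $h$ lying in both $N$ and $gNg^{-1}$; hence $h$ preserves both $\{\xi^+, \xi^-\}$ and $\{g\xi^+, g\xi^-\}$ setwise. A loxodromic element of an acylindrical action has exactly two fixed points on $\partial X$, so (after passing to $h^2$ if $h$ swaps the pairs) both unordered pairs must coincide with $\{h^+, h^-\}$. Consequently $g \cdot \{\xi^+, \xi^-\} = \{\xi^+, \xi^-\}$, and since $g$ was arbitrary, the whole group $G$ stabilises $\{\xi^+, \xi^-\}$, contradicting non-elementarity.

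The main conceptual obstacle is having Osin's trichotomy in place: it is precisely the collapse of the elementary alternatives to ``elliptic or virtually cyclic lineal'' that makes both cases short, and that shuts the door on parabolic or quasi-parabolic behaviour which could otherwise occur for non-acylindrical actions. Beyond this input, the proof uses acylindricity only twice in an elementary way — directly in the elliptic case, and via the two-fixed-point structure of loxodromics in the lineal case — and the $s$-normality hypothesis is used exactly to guarantee that the intersections $N \cap gNg^{-1}$ are infinite (so that the acylindricity bound $N(R)$ can be violated, and so that the intersection in the lineal case contains an element of infinite order).
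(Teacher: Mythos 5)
The paper states this result purely as a citation to Osin's Lemma~7.1 and gives no proof of its own; your argument is correct and reconstructs essentially the argument of the cited source, namely applying Osin's elliptic/lineal/non-elementary trichotomy to the restricted (still acylindrical) action of the $s$-normal subgroup and ruling out the two elementary cases via the acylindricity bound and via the rigidity of loxodromic endpoint pairs, respectively. The only points worth tightening are the phrase ``its torsion subgroup is finite'' (the torsion elements of a virtually cyclic group such as $D_\infty$ need not form a subgroup, though infinite subgroups do contain infinite-order elements) and the claim that the infinite-order element $h$ is loxodromic: this holds because $h$ has a nontrivial power in the finite-index cyclic subgroup generated by a loxodromic element of $N$, not because infinite order implies loxodromic in general for acylindrical actions.
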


\begin{corollary}\label{cor:BF}
If $H \leq G$ is an $s$-normal subgroup of an acylindrically hyperbolic group $G$ then the restriction $Q_h(G) \to Q_h(H)$ has an infinite-dimensional image.
\end{corollary}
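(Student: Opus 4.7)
The plan is to chain together the two results stated immediately above the corollary. Since $G$ is acylindrically hyperbolic, by definition it admits a non-elementary acylindrical action on a hyperbolic geodesic metric space, which (as recalled in Section~\ref{s:ggt} via \cite[Theorem~1.2]{osin}) may be taken to be a hyperbolic graph $X$, namely a Cayley graph of $G$ with respect to a possibly infinite generating set. Acylindrical actions are in particular WPD, so the action of $G$ on $X$ fits the hypotheses of Theorem~\ref{thm:BF}.

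Next, I would invoke Theorem~\ref{thm:osin}: because $H$ is $s$-normal in $G$ and the action of $G$ on $X$ is acylindrical and non-elementary, the restriction of this action to $H$ is also non-elementary. At this point $H \leq G$ is a non-elementary subgroup for a non-elementary WPD action on a hyperbolic graph, so Theorem~\ref{thm:BF}(2) applies and yields that the restriction map $Q_h(G) \to Q_h(H)$ has infinite-dimensional image.

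There is no substantive obstacle here; the only thing to check is that the word ``non-elementary'' is used consistently across the two inputs (in both cases it means the action has unbounded orbits and is not virtually cyclic fixing a point at infinity, equivalently it contains two independent loxodromic elements), which is indeed the standard convention followed in \cite{osin, BF} and in this paper.
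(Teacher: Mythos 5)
Your proposal is correct and follows exactly the paper's own (very terse) argument: pick a non-elementary acylindrical action of $G$ on a hyperbolic graph, note that it is WPD, apply Theorem~\ref{thm:osin} to get that $H$ acts non-elementarily, and conclude with Theorem~\ref{thm:BF}(2). The only difference is that you spell out the intermediate steps that the paper leaves implicit.
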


\begin{proof}
By Theorem~\ref{thm:osin}, we can apply Theorem~\ref{thm:BF} to any acylindrical action of $G$.
\end{proof}

\section{Proofs of the main results}
\label{s:proof}

\subsection{Acylindrically hyperbolic automorphism groups}

We are finally ready to prove Theorem \ref{intro:thm:ah}, which we recall for the reader's convenience:

\begin{theorem}
\label{thm:ah}

Let $G$ be a group, and suppose that $G$ is not virtually central and $\Aut(G)$ is acylindrically hyperbolic. Then there exists an infinite-dimensional space of $\Aut$-invariant homogeneous quasimorphisms on $G$.
\end{theorem}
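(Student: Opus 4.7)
The strategy is to feed Corollary~\ref{cor:BF} into Lemma~\ref{lem:main}: the latter reduces the problem to finding an infinite-dimensional space of homogeneous quasimorphisms on $\Aut(G)$ whose restrictions to $\Inn(G)$ are linearly independent, and the former should produce exactly such a space, once we check that $\Inn(G)$ is an $s$-normal subgroup of the (acylindrically hyperbolic) group $\Aut(G)$.

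First I would verify this $s$-normality. Because $\Inn(G) \cong G/Z(G)$, the hypothesis that $G$ is not virtually central (i.e.\ $[G:Z(G)] = \infty$) guarantees that $\Inn(G) \leq \Aut(G)$ is infinite. Being normal in $\Aut(G)$, every conjugate of $\Inn(G)$ equals $\Inn(G)$ itself, so $\Inn(G) \cap \phi \Inn(G) \phi^{-1} = \Inn(G)$ is infinite for all $\phi \in \Aut(G)$; hence $\Inn(G)$ is $s$-normal.

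Corollary~\ref{cor:BF}, applied to the acylindrically hyperbolic group $\Aut(G)$ and the $s$-normal subgroup $\Inn(G)$, then yields an infinite-dimensional subspace $V \subseteq Q_h(\Aut(G))$ whose restrictions to $\Inn(G)$ are linearly independent. To transport this to $G$, I would observe that $\ad : G \to \Aut(G)$ factors as the surjection $G \twoheadrightarrow \Inn(G)$ followed by the inclusion $\Inn(G) \hookrightarrow \Aut(G)$. Consequently, the map $V \to Q_h(G)$, $f \mapsto \ad^* f$, factors as restriction to $\Inn(G)$ followed by pullback along the surjection $G \twoheadrightarrow \Inn(G)$. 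The first has infinite-dimensional image by construction; the second is injective, since pullback along a surjection is injective on quasimorphisms. By Lemma~\ref{lem:main}, the resulting infinite-dimensional image lies in the space of $\Aut$-invariant homogeneous quasimorphisms on $G$.

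The main conceptual content is not in this assembly but in the Bestvina--Fujiwara machinery underlying Corollary~\ref{cor:BF} and in Osin's lemma (Theorem~\ref{thm:osin}); both are already in place. The role of the hypothesis ``not virtually central'' is precisely to ensure that the normal subgroup $\Inn(G)$ is infinite, which is exactly what is needed for Theorem~\ref{thm:osin} to produce a non-elementary action of $\Inn(G)$, and hence enough chiral loxodromic elements inside $\Inn(G)$ for the Bestvina--Fujiwara construction to detect.
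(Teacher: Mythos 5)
Your proposal is correct and follows essentially the same route as the paper: the hypothesis that $G$ is not virtually central makes $\Inn(G)$ an infinite normal (hence $s$-normal) subgroup of the acylindrically hyperbolic group $\Aut(G)$, Corollary~\ref{cor:BF} gives an infinite-dimensional image of the restriction $Q_h(\Aut(G)) \to Q_h(\Inn(G))$, and Lemma~\ref{lem:main} together with injectivity of the pullback along the surjection $G \twoheadrightarrow \Inn(G)$ transports this to an infinite-dimensional space of $\Aut$-invariant homogeneous quasimorphisms on $G$. No gaps.
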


\begin{proof}
If $G$ is not virtually $Z(G)$, then $G/Z(G) \cong \Inn(G)$ is an infinite normal subgroup of $\Aut(G)$. Therefore, as $\Aut(G)$ is acylindrically hyperbolic, Corollary~\ref{cor:BF} tells us that the restriction $Q_h(\Aut(G)) \to Q_h(\Inn(G))$ has infinite-dimensional image. Lemma \ref{lem:main} tells us that the image of the composition 
\[ Q_h(\Aut(G)) \to Q_h(\Inn(G)) \to Q_h(G) \]
consists of $\Aut(G)$-invariant homogeneous quasimorphisms, and as $G \to G/Z(G)\cong \Inn(G)$ is surjective, the second arrow above is injective.
\end{proof}

\begin{remark}
Note that the assumption that $G$ is not virtually central does not follow from the acylindrical hyperbolicity of $\Aut(G)$. For instance, the automorphism group of $\mathbb{Z}^2$ is isomorphic to $\mathrm{GL}_2(\mathbb{Z})$, which is virtually free non-abelian, and thus acylindrically hyperbolic.
\end{remark}

\begin{remark}\label{rem:subgroups}
If $S$ is a finite set and $S^{\Aut}$ does not generate $G$, then $H=\langle S^{\Aut} \rangle$ is a characteristic subgroup of $G$. If $H$ is not virtually central in $G$ and $\Aut(G)$ is acylindrically hyperbolic, then the same proof as in Theorem~\ref{thm:ah} implies that there are nontrivial homogeneous quasimorphisms on $H$. Proposition~\ref{prop:wordnorm} also implies that the word norm $|\cdot|_{S^{\Aut}}$ is unbounded (but finite) on $H$.
\end{remark}

We immediately obtain Corollary \ref{intro:cor:bin}:

\begin{corollary}
\label{cor:bin}

Let $Q$ be a countable group. Then there exists a finitely generated acylindrically hyperbolic group $G$ with an infinite-dimensional space of $\Aut$-invariant quasimorphisms such that $\Out(G) \cong Q$.
\end{corollary}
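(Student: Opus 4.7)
The plan is to apply Theorem \ref{thm:ah} to a group produced by Sun's construction in \cite{bin}. Given a countable group $Q$, I first invoke Sun's theorem to obtain a finitely generated acylindrically hyperbolic group $G$ with $\Out(G) \cong Q$. Since the center of any acylindrically hyperbolic group is finite while $G$ itself is infinite, $G$ is not virtually central, and $\Inn(G) \cong G/Z(G)$ is an infinite acylindrically hyperbolic normal subgroup of $\Aut(G)$.

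The next step is to verify that $\Aut(G)$ is itself acylindrically hyperbolic, which is the remaining hypothesis of Theorem \ref{thm:ah}. This follows from the general principle that a group containing an infinite normal acylindrically hyperbolic subgroup is itself acylindrically hyperbolic: using the characterization of acylindrical hyperbolicity via non-degenerate hyperbolically embedded subgroups from \cite{DGO}, one extends a hyperbolically embedded subgroup of $\Inn(G)$ to one of $\Aut(G)$. Applied to $\Inn(G) \trianglelefteq \Aut(G)$, this yields acylindrical hyperbolicity of $\Aut(G)$.

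With both hypotheses of Theorem \ref{thm:ah} satisfied, we obtain the desired infinite-dimensional space of $\Aut$-invariant homogeneous quasimorphisms on $G$. The main obstacle in this approach is the passage from acylindrical hyperbolicity of an infinite normal subgroup to that of the ambient group; this should either be stated explicitly in \cite{DGO} or derivable from the now-standard toolkit for hyperbolically embedded subgroups, and it is the step where I would expect to spend the most care. A secondary point to confirm is simply that Sun's construction delivers a group on which $\Aut$ acts in the expected way, so that $\Inn(G)$ is genuinely normal in $\Aut(G)$ with infinite image.
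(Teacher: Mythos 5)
Your overall strategy---apply Theorem~\ref{thm:ah} to a group coming from Sun's construction---matches the paper's, and your verification that $G$ is not virtually central is fine (an acylindrically hyperbolic group is infinite and has finite centre). The gap is the step where you deduce acylindrical hyperbolicity of $\Aut(G)$ from the ``general principle'' that a group containing an infinite normal acylindrically hyperbolic subgroup is itself acylindrically hyperbolic. No such principle is available. The known results, such as Osin's Lemma~7.1 (quoted as Theorem~\ref{thm:osin} above), go in the opposite direction: from the ambient group down to an $s$-normal subgroup. If your principle held, it would essentially answer Genevois's question of whether $\Aut(G)$ is acylindrically hyperbolic for every finitely generated acylindrically hyperbolic $G$ (at least in the centreless case, where $\Inn(G)$ is an infinite normal acylindrically hyperbolic subgroup of $\Aut(G)$); the paper records this as an open problem in Section~\ref{s:outlook}, next to the related open problem of whether acylindrical hyperbolicity passes to finite-index overgroups. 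Promoting a hyperbolically embedded subgroup of a normal subgroup to a hyperbolically embedded subgroup of the ambient group is exactly the step that is not known how to carry out in general, so this is not a matter of ``spending care''---it is the missing ingredient.

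The fix is that no such principle is needed: \cite[Theorem~4.5]{bin} already asserts, as part of its conclusion, that the group $G$ it produces satisfies $\Out(G)\cong Q$, is acylindrically hyperbolic, \emph{and} has acylindrically hyperbolic automorphism group. This is precisely what the paper's proof invokes; with that hypothesis supplied directly by Sun's theorem, Theorem~\ref{thm:ah} applies immediately and the rest of your argument goes through.
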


\begin{proof}
    By \cite[Theorem 4.5]{bin}, there exists an acylindrically hyperbolic group $G$ such that $\Out(G) \cong Q$ and $\Aut(G)$ is acylindrically hyperbolic. We conclude by Theorem \ref{thm:ah}.
\end{proof}

\subsection{Automorphisms of graph products}

Suppose that $\G$ is a finite graph and we have a set of groups $\mathcal{G}=\{ G_v\}_{v \in V(\G)}$ indexed by the vertices of $\G$. The \emph{graph product} $\G\mathcal{G}$ is defined to be the quotient of the free product of the elements of $\mathcal{G}$ by the normal subgroup generated by the set of subgroups $\{ [G_v,G_w] : \{v,w\} \in E\G\}$. In other words, $\gp$ is the `most free' group containing the $G_v$ such that the groups at adjacent vertices in $\G$ commute. Examples of graph products include right-angled Artin groups (where we take each $G_v=\mathbb{Z}$) and right-angled Coxeter groups (where we take each $G_v = \mathbb{Z}/2\mathbb{Z}$). 

\begin{theorem}\label{thm:graph_product}
Suppose that $\gp$ is a graph product of nontrivial finitely generated groups $\mathcal{G}=\{G_v\}_v$ on a finite graph $\G$ such that: 
\begin{itemize}
    \item No vertex group decomposes nontrivially as a graph product;
    \item $\G$ is not complete;
    \item $\G$ is not the iterated join of a complete graph with pairs of isolated vertices, such that all of the attached isolated vertices are labelled by $\mathbb{Z}/2\mathbb{Z}$.
\end{itemize}
Then the space of $\Aut$-invariant homogeneous quasimorphisms on $\gp$ is infinite-dimensional.
\end{theorem}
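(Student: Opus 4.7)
The plan is to combine the structure theorems for $\Aut(\gp)$ from \cite{auto:graph, auto:cyclic} with Theorem~\ref{thm:ah} and Proposition~\ref{prop:char} to produce the desired infinite-dimensional space.

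First, I would analyze the join decomposition of the graph $\G$. Write $\G = \G_0 \ast \G_1 \ast \cdots \ast \G_m$, where $\G_0$ is the (possibly empty) clique of vertices adjacent to every other vertex, and $\G_1, \ldots, \G_m$ are the prime join components (those that do not themselves decompose nontrivially as a join). This induces a direct-product decomposition
\[ \gp \;=\; \gp|_{\G_0} \times \gp|_{\G_1} \times \cdots \times \gp|_{\G_m}. \]
The assumption that $\G$ is not complete forces $m \geq 1$. The third exclusion (on iterated joins of a clique with pairs of $\mathbb{Z}/2$-vertices), together with the assumption that no vertex group is itself a nontrivial graph product, then guarantees that at least one prime factor $\gp_* := \gp|_{\G_i}$ is not virtually abelian; in particular $\gp_*$ is not isomorphic to $\mathbb{Z}/2 \ast \mathbb{Z}/2$ and has infinite noncentral quotient.

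Next, I would apply Theorem~\ref{thm:ah} to this ``good'' factor $\gp_*$. Since $\G_*$ is prime and $\gp_*$ is not virtually abelian, the classification \cite[Theorem~1.5]{auto:graph} asserts that $\Aut(\gp_*)$ is acylindrically hyperbolic; and $\gp_*$ itself is not virtually central. Theorem~\ref{thm:ah} then provides an infinite-dimensional space $V \subseteq Q_h(\gp_*)$ of $\Aut(\gp_*)$-invariant homogeneous quasimorphisms.

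Finally, I would lift $V$ to $\gp$. The structure theorem from \cite{auto:graph, auto:cyclic} says that every automorphism of $\gp$ preserves the join decomposition up to permutation of isomorphic prime factors (and up to twisting by the center-like factor $\gp|_{\G_0}$); hence the subgroup $N \leq \gp$ generated by $\gp|_{\G_0}$ together with all prime factors \emph{not} isomorphic to $\gp_*$ is characteristic. The quotient $\pi : \gp \twoheadrightarrow \gp/N \cong \gp_*^{\oplus k}$, where $k$ is the number of prime factors isomorphic to $\gp_*$, is therefore $\Aut(\gp)$-equivariant. Given $f \in V$, set $\widetilde{f} := \sum_{j=1}^k f \circ p_j : \gp_*^{\oplus k} \to \mathbb{R}$, where $p_j$ is projection to the $j$-th factor; this is an $\Aut(\gp_*^{\oplus k})$-invariant homogeneous quasimorphism because it is symmetric under permutations of factors. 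By Proposition~\ref{prop:char}, the pullbacks $\pi^* \widetilde{f}$ form an infinite-dimensional space of $\Aut(\gp)$-invariant homogeneous quasimorphisms on $\gp$.

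The main obstacle will be extracting the characteristic quotient $\pi$ cleanly from the automorphism structure of graph products: one must check that $\Aut(\gp)$ genuinely respects the grouping of prime factors by isomorphism type, absorbing partial conjugations by the central factor using the conjugacy-invariance of homogeneous quasimorphisms (Lemma~\ref{lem:conjinv}). A delicate point is identifying, under the hypothesis list, the correct ``good'' factor $\gp_*$ in borderline configurations---in particular when $\G$ contains pairs of $\mathbb{Z}/2$-vertices as well as one additional non-dihedral prime component---so that \cite[Theorem~1.5]{auto:graph} genuinely applies to $\gp_*$.
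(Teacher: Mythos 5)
Your proposal is correct and follows essentially the same route as the paper: the same prime join decomposition, the identification of a non-$D_\infty$ prime factor whose automorphism group is acylindrically hyperbolic by \cite{auto:graph}, Theorem~\ref{thm:ah} applied to that factor, averaging over the isomorphic copies, and a pullback along a characteristic quotient via Proposition~\ref{prop:char}. The only (cosmetic) difference is that the paper quotients only by the clique factor $H_0$ and averages inside the full direct product $H_1\oplus\cdots\oplus H_n$, whereas you quotient by the larger characteristic subgroup containing all non-isomorphic prime factors; both rest on the same structure result (Proposition~3.11 of \cite{auto:graph}) that $\Aut$ of the product is generated by factor automorphisms and permutations of isomorphic factors.
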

The last condition appears as two isolated vertices labelled by $\mathbb{Z}/2\mathbb{Z}$ gives the graph product isomorphic to the infinite dihedral group $D_\infty=\mathbb{Z}/2\mathbb{Z} \ast \mathbb{Z}/2\mathbb{Z}$. As these dihedral groups are virtually $\mathbb{Z}$, the group $\gp$ is virtually isomorphic to a group $G \oplus \mathbb{Z}^n$ in the third bullet point (where $G$ is a graph product on a complete graph).

\begin{proof}
Any finite graph $\G$ decomposes as a join $\G=\G_0 \ast \G_1 \ast \cdots \ast \G_n$ such that\begin{itemize}
    \item $\G_0$ is complete
    \item Each $\G_i$ contains at least two vertices and does not decompose as a nontrivial join. 
\end{itemize}
This decomposition is unique up to permuting the set $\{\G_i\}_{i\geq 1}$. Let $H_i$ be the subgroup of $\gp$ generated by the vertex groups of $\G_i$. The main theorem of \cite{auto:graph}, proves that for all $i \geq 1$, either $\Aut(H_i)$ is acylindrically hyperbolic or $H_i \cong D_\infty$. By our hypothesis, after possibly relabeling we may assume that $H_1 \neq D_\infty$, so that $\Aut(H_1)$ is acylindrically hyperbolic. Our goal is to extend $\Aut(H_1)$-invariant homogeneous quasimorphisms on $H_1$ to $\Aut(\gp)$-invariant homogeneous quasimorphisms of $\gp$. First, let \[ H=H_1 \oplus H_2 \oplus \cdots \oplus H_n, \] and without loss of generality suppose that $H_1, \ldots, H_k$ are the factors of $H$ that are isomorphic to $H_1$. Let $J \leq S_n$ be the subgroup of the symmetric group consisting of permutations $\sigma$ such that $H_{\sigma(i)}\cong H_i$ for all $i=1, \ldots, n$. Such a permutation restricts to a permutation of $\{1,\ldots, k\}$. If we fix compatible isomorphisms between isomorphic factors, these induce an embedding $\sigma \mapsto \phi_\sigma$ from $J$ to $\Aut(H)$ such that \[ \phi_\sigma(h_1,\ldots, h_n)= (h_{\sigma^{-1}(1)}, \ldots, h_{\sigma^{-1}(n)}). \]
To keep our notation consistent with \cite{auto:graph}, we label the image of $J$ in $\Aut(H)$ by $S$. We want to promote quasimorphisms $f$ on $H_1$ to quasimorphisms on $H$ that are invariant under $S$, so we need to `average out' over the factors isomorphic to $H_1$. To this end, for $h=(h_1, \ldots, h_n) \in H$ and a map $f \colon H_1 \to \mathbb{R}$ define \[ \int f := \sum_{i=1}^k f (h_i). \]

\begin{claim}
For $H<\gp$ as above,  $f \mapsto \int f$ sends $\Aut(H_1)$-invariant maps $f\colon H_1 \to \mathbb{R}$ to $\Aut(H)$-invariant maps $\int f \colon H \to \mathbb{R}$. Furthermore, the map $\int:\Map(H_1,\mathbb{R}) \to \Map(H,\mathbb{R})$ is injective and restricts to an injective map $Q_h(H_1) \to Q_h(H)$.
\end{claim}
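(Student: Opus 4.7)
The plan is to verify the three assertions of the claim in turn; the first---$\Aut(H)$-invariance of $\int f$---is the one with substantive content, while the remaining parts reduce to direct calculations.

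For $\Aut(H)$-invariance, I would invoke the description of $\Aut(H)$ implicit in \cite{auto:graph} (see also \cite{auto:cyclic}): under the standing hypotheses, the direct product decomposition $H = H_1 \oplus \cdots \oplus H_n$ matches the unique join decomposition of $\G \setminus \G_0$ into indecomposable non-join pieces, so there are no ``mixing'' automorphisms between distinct factors beyond the obvious permutations, and every $\phi \in \Aut(H)$ factors as $\phi_\sigma \circ (\psi_1 \times \cdots \times \psi_n)$ with $\sigma \in J$ and $\psi_i \in \Aut(H_i)$. It then suffices to check invariance separately on the two types. For a permutation $\phi_\sigma$, computing $\int f(\phi_\sigma(h)) = \sum_{i=1}^{k} f(h_{\sigma^{-1}(i)})$ and then reindexing $j = \sigma^{-1}(i)$ gives back $\int f(h)$, because $\sigma$ restricts to a permutation of $\{1,\ldots,k\}$. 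For a product of factor-automorphisms, only the first $k$ coordinates appear in $\int f$, and on those $\psi_i$ corresponds under the chosen identifications to an element of $\Aut(H_1)$, so the $\Aut(H_1)$-invariance of $f$ kills its effect term-by-term.

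For injectivity of $\int$ on $\Map(H_1, \mathbb{R})$, I would use test points: comparing $\int f(1, \ldots, 1) = kf(1)$ pins down $f(1)$, and then $\int f(h_1, 1, \ldots, 1) = f(h_1) + (k-1) f(1)$ recovers $f(h_1)$ for arbitrary $h_1 \in H_1$. For the restriction $Q_h(H_1) \to Q_h(H)$, homogeneity of $\int f$ is immediate since powers in $H$ are computed coordinate-wise and $f$ is homogeneous; the defect bound $D(\int f) \leq k \cdot D(f)$ follows from splitting the coboundary of $\int f$ as a sum of $k$ coboundaries of $f$ evaluated coordinate-wise; and injectivity is inherited from the previous step.

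The main obstacle is the first step: convincing oneself that under the standing hypotheses the automorphism group of the direct product $H$ really factors as $S \cdot \prod_i \Aut(H_i)$, with no extra ``diagonal'' automorphisms coming from homomorphisms into centers of factors. Once that decomposition is extracted from \cite{auto:graph}, the rest of the claim is purely mechanical bookkeeping.
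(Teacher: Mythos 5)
Your proposal is correct and follows essentially the same route as the paper: both reduce $\Aut(H)$-invariance to the two types of generators supplied by Proposition~3.11 of \cite{auto:graph} (permutations of isomorphic factors and factor-preserving automorphisms), then handle permutations by reindexing the sum and factor automorphisms by the $\Aut(H_1)$-invariance of $f$, with the same coordinate-wise checks for homogeneity and the defect bound $D(\int f)\leq kD(f)$. Your test-point argument for injectivity of $\int$ on $\Map(H_1,\mathbb{R})$ is in fact slightly more careful than the paper's one-line observation that $\int f|_{H_1}=f$, which strictly speaking requires $f(1)=0$; this is a cosmetic difference only.
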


\begin{proof}[Proof of Claim]
Proposition~3.11 of \cite{auto:graph} implies that $\Aut(H)$ is generated by the automorphisms of the individual factors $H_1, \ldots, H_n$ and the group $S$. Hence to prove invariance, we need only to look at elements of $S$ and automorphisms of $H$ that preserve each factor. If $\phi_\sigma \in S$ and $f\colon H \to \mathbb{R}$ then
\begin{align*} \int f(\phi_\sigma(h))&=\sum_{i=1}^kf(h_{\sigma^{-1}(i)}) \\ &=\sum_{i=1}^k f (h_i) = \int f (h)
\end{align*}
as $\{h_1,\ldots,h_k\}=\{h_{\sigma^{-1}(1)},\ldots, h_{\sigma^{-1}(k)}\}$. Furthermore, if $f: H_1 \to \mathbb{R}$ is $\Aut(H_1)$-invariant, and $\phi$ is an automorphism of $H$ that restricts to an automorphism $\phi_i$ of each factor, then  \begin{align*} \int f(\phi(h))&=\sum_{i=1}^kf(\phi_i(h_i)) \\ &=\sum_{i=1}^k f (h_i) = \int f (h).
\end{align*}
Hence if $f$ is $\Aut(H_1)$-invariant, the map $\int f$ is $\Aut(H)$-invariant. The averaging map $\int$ is injective as $\int f |_{H_1}=f$, and if $f$ is a quasimorphism with defect $D$ then the defect of $\int f$ is at most $kD$ (in fact, using \cite[Lemma 2.24]{calegari}, the defect of $\int f$ is exactly $kD$).
One can also check that if $f$ is homogeneous then so is $\int f$. Hence we obtain an injective map \[ \int \colon Q_h(H_1)^{\Aut(H_1)} \to Q_h(H)^{\Aut(H)} \]
\end{proof}

The group $H_1$ is infinite and its center $Z(H_1)$ is trivial (as $\G_1$ does not decompose as a nontrivial join). Therefore, as $\Aut(H_1)$ is acylindrically hyperbolic \cite[Theorem~7.1]{auto:graph}, Theorem \ref{thm:ah} implies that $Q_h(H_1)$ contains an infinite-dimensional space of $\Aut(H_1)$-invariant homogeneous quasimorphisms. The above claim implies that these give us an infinite-dimensional subspace of $\Aut(H)$-invariant homogeneous quasimorphisms on $H$. The group $H_0 \leq \gp$ is characteristic in $\gp$ (again this is contained in Proposition~3.11 of \cite{auto:graph}).
Proposition \ref{prop:char} then implies that the map $Q_h(H) \to Q_h(\gp)$ induced by the quotient \[\gp \to \gp / H_0 \cong H\] gives us an infinite-dimensional space of $\Aut(\gp)$-invariant homogeneous quasimorphisms of $\gp$ coming from the $\Aut(H)$-invariant homogeneous quasimorphisms of $H$. 
 \end{proof}

\begin{corollary}
\label{cor:gp:fga}

If $\gp$ is a graph product of nontrivial finitely generated abelian groups on a finite graph $\Gamma$, and $\gp$ is not virtually abelian, then the space of $\Aut$-invariant homogeneous quasimorphisms on $\gp$ is infinite-dimensional.
\end{corollary}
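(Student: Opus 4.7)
My plan is to reduce Corollary~\ref{cor:gp:fga} to Theorem~\ref{thm:graph_product}. The only potential obstruction is that a finitely generated abelian vertex group $G_v$ may itself decompose nontrivially as a graph product (namely, as the direct product of its cyclic factors along a complete graph), violating the first hypothesis of Theorem~\ref{thm:graph_product}. The remedy is to refine the presentation so that every vertex group becomes indecomposable.

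First I would apply the invariant factor decomposition, writing each $G_v \cong \mathbb{Z}^{k_v} \oplus \bigoplus_j \mathbb{Z}/p_{v,j}^{a_{v,j}}\mathbb{Z}$, and replace each vertex $v$ of $\G$ by a complete graph on these cyclic summands, declaring two new vertices to be adjacent whenever either they replace the same $v$ or their underlying original vertices are adjacent in $\G$. This yields a new graph $\G'$ labelled by the cyclic factors $\mathcal{G}'$. Since each $G_v$ is itself the graph product of its cyclic factors along a complete graph, and composing graph products along joins again yields a graph product, we obtain $\G'\mathcal{G}' \cong \gp$. The vertex groups of $\G'$ are now of the form $\mathbb{Z}$ or $\mathbb{Z}/p^k\mathbb{Z}$, and these are indecomposable as graph products of nontrivial groups: on a non-complete graph one obtains an infinite non-abelian group (as the free product of two non-adjacent vertex groups embeds), while on a complete graph one obtains a direct product of nontrivial abelian groups, and neither $\mathbb{Z}$ nor $\mathbb{Z}/p^k\mathbb{Z}$ admits such a decomposition.

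The remaining two hypotheses of Theorem~\ref{thm:graph_product} I would verify contrapositively. If $\G'$ were complete, then $\G'\mathcal{G}'$ would be a direct sum of abelian groups, hence abelian. If $\G'$ were an iterated join of a complete graph with pairs of isolated vertices all labelled by $\mathbb{Z}/2\mathbb{Z}$, then $\G'\mathcal{G}'$ would split as a direct product of an abelian group with finitely many copies of $D_\infty$, hence virtually abelian since $D_\infty$ is. Both conclusions contradict the assumption that $\gp$ is not virtually abelian, so all three hypotheses of Theorem~\ref{thm:graph_product} hold for $\G'$. Applying that theorem to the presentation $\G'\mathcal{G}' \cong \gp$ produces the desired infinite-dimensional space of $\Aut$-invariant homogeneous quasimorphisms.

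I do not anticipate a serious obstacle. The only step requiring some care is verifying that the refined product $\G'\mathcal{G}'$ is genuinely isomorphic to $\gp$ (and not merely a surjective image), which follows from the universal property of graph products applied in both directions: the inclusions $G_{v,i} \hookrightarrow G_v$ respect adjacency, and conversely the decomposition maps $G_v \to G_{v,1} \oplus \cdots \oplus G_{v,n_v}$ land in a subgroup of $\G'\mathcal{G}'$ whose factors pairwise commute.
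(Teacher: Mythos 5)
Your proposal is correct and follows essentially the same route as the paper: refine $\G$ so that every vertex group is a nontrivial cyclic group ($\mathbb{Z}$ or $\mathbb{Z}/p^k\mathbb{Z}$, hence indecomposable as a graph product) and then invoke Theorem~\ref{thm:graph_product}, ruling out the complete-graph and iterated-join exceptional cases because they force $\gp$ to be virtually abelian. The paper states this contrapositively and more tersely, while you spell out the refinement, the isomorphism $\G'\mathcal{G}'\cong\gp$, and the indecomposability check; the mathematics is the same (only a cosmetic slip: the decomposition into prime-power cyclic factors you use is the primary/elementary-divisor decomposition rather than the invariant factor decomposition).
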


\begin{proof}
Suppose the space of $\Aut$-invariant homogeneous quasimorphisms on $\gp$ is finite dimensional. As all vertex groups are finitely generated and abelian we may refine $\G$ so that every vertex group is cyclic and does not decompose as a graph product (i.e. for every vertex $v$ either $G_v=\mathbb{Z}/p^k\mathbb{Z}$ for a prime $p$ and $k \in \mathbb{N}$ or $G_v = \mathbb{Z}$). By Theorem~\ref{thm:graph_product}, $\G$ is either complete or the iterated join of a complete graph, with pairs of isolated vertices, such that the pairs of isolated vertices are labelled by $\mathbb{Z}/2\mathbb{Z}$. Then $\gp \cong A \oplus D_\infty^n$ for an abelian group $A$ and some $n$, and is virtually isomorphic to $A \oplus \mathbb{Z}^n$.
\end{proof}

\begin{remark}
In the context of Corollary \ref{cor:gp:fga}, Marcinkowski had already proved that $\gp$ is virtually abelian if and only if the $\Aut$-invariant word norm is bounded \cite{marcinkowski:graph}. In virtue of Proposition \ref{prop:wordnorm}, Corollary \ref{cor:gp:fga} implies that this in turn is equivalent to the non-existence of unbounded $\Aut$-invariant homogeneous quasimorphisms.
\end{remark}

\begin{corollary}
If $G$ is a finitely generated right-angled Artin or Coxeter group that is not virtually abelian, then the space of $\Aut$-invariant homogeneous quasimorphisms on $G$ is infinite-dimensional. \qed
\end{corollary}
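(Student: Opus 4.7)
The plan is to observe that the statement is an immediate specialisation of Corollary~\ref{cor:gp:fga}, together with a small verification about the defining graph. First, I would recall that a right-angled Artin group is by definition the graph product $\Gamma\mathcal{G}$ in which every vertex group $G_v$ is an infinite cyclic group $\mathbb{Z}$, and a right-angled Coxeter group is the graph product in which every vertex group $G_v$ is $\mathbb{Z}/2\mathbb{Z}$. In both cases every vertex group is a nontrivial finitely generated abelian group, so $G$ falls within the class considered in Corollary~\ref{cor:gp:fga}.

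Next, I would check that the finite-generation hypothesis on $G$ forces the defining graph $\Gamma$ to be finite. Since the vertices of $\Gamma$ inject into $G$ as a generating set (and abelianising kills none of the vertex generators in either the RAAG or RACG case), if $\Gamma$ had infinitely many vertices then $G$ would not admit a finite generating set. Thus $\Gamma$ is finite, and we are genuinely in the setting of Corollary~\ref{cor:gp:fga}.

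Finally, since $G$ is not virtually abelian by hypothesis, Corollary~\ref{cor:gp:fga} applies directly and produces an infinite-dimensional space of $\Aut$-invariant homogeneous quasimorphisms on $G$. There is no real obstacle to overcome here: the only content beyond invoking Corollary~\ref{cor:gp:fga} is the trivial observation that RAAGs and RACGs are graph products of finitely generated abelian groups on a finite graph, which is immediate from the definitions. The whole proof is one line: apply Corollary~\ref{cor:gp:fga}.
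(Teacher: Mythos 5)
Your proposal is correct and matches the paper's (implicit) argument exactly: the corollary is stated with an immediate \qed precisely because a right-angled Artin (resp.\ Coxeter) group is a graph product with all vertex groups $\mathbb{Z}$ (resp.\ $\mathbb{Z}/2\mathbb{Z}$), finite generation forces the defining graph to be finite, and Corollary~\ref{cor:gp:fga} then applies verbatim. Your extra remark that finite generation of $G$ implies finiteness of $\Gamma$ is a correct and harmless elaboration of what the paper leaves unsaid.
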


\subsection{The proof of Theorem~\ref{intro:thm:main}}

Theorem~\ref{intro:thm:main} from the introduction follows quickly by combining the above work with results of Genevois and Horbez \cite{auto:infend}. We repeat the statement and give a brief proof below.

\begin{theorem}
\label{thm:main}

Let $G$ be a finitely generated group, and suppose that one of the following holds:
\begin{enumerate}
    \item $G$ is a non-elementary Gromov-hyperbolic group;
    \item $G$ is non-elementary hyperbolic relative to a collection of finitely generated subgroups $\mathcal{P}$, and no  group in $\mathcal{P}$ is relatively hyperbolic;
    \item $G$ has infinitely many ends;
    \item $G$ is a graph product of finitely generated abelian groups on a finite graph, and $G$ is not virtually abelian.
\end{enumerate}

Then there exists an infinite-dimensional space of $\Aut$-invariant homogeneous quasimorphisms on $G$.
\end{theorem}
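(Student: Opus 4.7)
My plan is to observe that item (4) is already established as Corollary~\ref{cor:gp:fga} above, and to reduce items (1), (2), (3) to Theorem~\ref{thm:ah} by verifying its two hypotheses: that $\Aut(G)$ is acylindrically hyperbolic, and that $G$ is not virtually central.

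For the acylindrical hyperbolicity of $\Aut(G)$, I would quote the work of Genevois and Horbez \cite{auto:infend} (along with \cite{auto:oneend}), which covers exactly the three settings in items (1)--(3): non-elementary hyperbolic groups, non-elementary relatively hyperbolic groups whose peripheral subgroups are not themselves relatively hyperbolic, and finitely generated groups with infinitely many ends. This is an external input, so the step is essentially a citation.

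For the second hypothesis, I would argue that in each of cases (1)--(3) the group $G$ itself is already acylindrically hyperbolic: case (1) is immediate (a non-elementary hyperbolic group acts acylindrically and non-elementarily on its Cayley graph with respect to a finite generating set), case (2) follows from Osin's work on relatively hyperbolic groups, and case (3) follows from Stallings' theorem combined with the standard acylindricity of Bass--Serre actions on trees for splittings over finite subgroups. Since every acylindrically hyperbolic group has finite center, and $G$ is infinite (in fact non-elementary) in each case, the quotient $G/Z(G) \cong \Inn(G)$ is infinite, so $G$ is not virtually central.

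With both hypotheses verified, Theorem~\ref{thm:ah} applies and yields the desired infinite-dimensional space of $\Aut$-invariant homogeneous quasimorphisms; for item (4) one simply cites Corollary~\ref{cor:gp:fga}. I do not foresee a real obstacle: all the substance of the result has been packaged into the preceding theorems, and what remains is the bookkeeping of matching each case to the appropriate input. The only delicate point is the relatively hyperbolic setting, where one must be careful to apply the Genevois--Horbez criterion under exactly the stated condition on $\mathcal{P}$; this is, however, directly tailored to the hypothesis of item (2).
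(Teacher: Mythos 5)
Your proposal is correct and follows essentially the same route as the paper: items (1)--(3) are reduced to Theorem~\ref{thm:ah} via the Genevois--Horbez results on acylindrical hyperbolicity of $\Aut(G)$, and item (4) is handled by Corollary~\ref{cor:gp:fga}. Your explicit verification that $G$ is not virtually central (via finite center of acylindrically hyperbolic groups) is a detail the paper leaves implicit, and it is sound.
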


\begin{proof}
In Items $1, 2$ and $3$, this is an application of Theorem \ref{thm:ah} (Theorem \ref{intro:thm:ah}) and the fact that $\Aut(G)$ is acylindrically hyperbolic \cite[Theorem 1.1, Theorem 1.3]{auto:infend} (see also \cite[Theorem 1.2]{auto:oneend}). Item $4$ is Corollary \ref{cor:gp:fga}.
\end{proof}

\subsection{Free products}

Theorem \ref{thm:main} relies on \cite{auto:infend}, which is why we require finite generation. However it is known that a free product of finitely many (possibly infinitely generated) freely indecomposable groups admits an infinite-dimensional space of $\Aut$-invariant homogeneous quasimorphisms, provided at most two factors are infinite cyclic \cite[Theorem 1.1]{karlhofer:graph}. This restriction is due to the fact that free groups must be dealt with separately, and when \cite{karlhofer:graph} was posted, only the rank $2$ case was solved \cite{rank2}. Using Theorem \ref{thm:main} for free groups, we are able to complete the picture:

\begin{corollary}
\label{cor:freeprod}

For $k \geq 2$, let $G = G_1 * \cdots * G_k$ be a free product of non-trivial freely indecomposable groups, and suppose that $G$ is not isomorphic to the infinite dihedral group. Then $G$ admits an infinite-dimensional space of $\Aut$-invariant homogeneous quasimorphisms, each of which is trivial when restricted to each $G_i$.
\end{corollary}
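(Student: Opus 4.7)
The plan is to split into two cases according to how many of the free factors $G_i$ are infinite cyclic. If at most two of them are, then we are in the hypothesis of \cite[Theorem 1.1]{karlhofer:graph}, whose construction already produces an infinite-dimensional space of $\Aut$-invariant homogeneous quasimorphisms that vanish on each $G_i$, and there is nothing to do. So the real work is when at least three of the $G_i$ are infinite cyclic. After reindexing, write
\[
G \;=\; F_n \ast H_1 \ast \cdots \ast H_m,
\]
where $F_n$ collects the infinite cyclic factors (so $n \geq 3$) and each $H_j$ is freely indecomposable and non-cyclic. The strategy is to produce the desired quasimorphisms on $G$ by pulling back $\Aut(F_n)$-invariant homogeneous quasimorphisms from the free quotient $F_n$.

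Concretely, consider the surjection $\pi : G \to F_n$ obtained by killing each $H_j$, with kernel $N = \langle\langle H_1, \ldots, H_m \rangle\rangle$. The first key step is to verify that $N$ is characteristic in $G$. This is a consequence of the Fouxe-Rabinovitch description of $\Aut$ of a free product: every automorphism of $G$ preserves the Grushko decomposition up to isomorphism-of-factors, permutation, and conjugation, and in particular the set of non-cyclic freely indecomposable factors is preserved up to conjugation; hence $\phi(N) = N$ for all $\phi \in \Aut(G)$. Granted this, Proposition~\ref{prop:char} yields an injection $\pi^* : Q_h(F_n) \hookrightarrow Q_h(G)$ sending $\Aut(F_n)$-invariants to $\Aut(G)$-invariants. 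Since $F_n$ with $n \geq 3$ is non-elementary Gromov-hyperbolic, Theorem~\ref{thm:main} supplies an infinite-dimensional space $V \subseteq Q_h(F_n)$ of $\Aut(F_n)$-invariant homogeneous quasimorphisms, and $\pi^*V$ is then an infinite-dimensional space of $\Aut(G)$-invariant homogeneous quasimorphisms on $G$.

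It remains to check that each $\pi^* f$ is trivial on each $G_i$. On the non-cyclic factors this is automatic since $H_j \subseteq N = \ker \pi$. On the cyclic factors: if $a_i \in F_n$ is a free basis element, the automorphism of $F_n$ that inverts $a_i$ and fixes the other basis elements, combined with $\Aut$-invariance and homogeneity of $f \in V$, forces $f(a_i) = -f(a_i) = 0$. Hence $f$ vanishes on every cyclic factor of $F_n$, and $\pi^* f$ vanishes on every cyclic $G_i$ of $G$.

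The main obstacle is the characteristic property of $N$: it rests on the precise statement that automorphisms of a free product of freely indecomposable groups permute the non-cyclic factors up to conjugation. This is classical (Fouxe-Rabinovitch, see also Bass--Jiang) and is also used implicitly in the decomposition arguments of \cite{karlhofer:graph}, so it should be invoked with a suitable citation rather than reproved.
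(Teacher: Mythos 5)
Your proof is correct and follows essentially the same route as the paper: split off the infinite cyclic factors, show that the kernel of the projection onto the free part $F_n$ is characteristic, and pull back $\Aut(F_n)$-invariant homogeneous quasimorphisms via Proposition~\ref{prop:char} and Theorem~\ref{thm:main}. The only differences are cosmetic: the paper establishes the characteristic property using \cite[Lemma 5.2]{karlhofer:graph} together with the Fouxe-Rabinovitch generating set rather than Grushko uniqueness, and deduces vanishing on the basis elements from Lemma~\ref{lem:camille} rather than from the inversion automorphism (also, your factors $H_j$ should be described as ``not infinite cyclic'' rather than ``non-cyclic,'' since finite cyclic factors are allowed).
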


For the proof, we follow \cite{karlhofer:graph} and denote by $\Aut^0(G)$ the subgroup of $\Aut(G)$ generated by factor automorphisms, transvections and partial conjugations (see e.g. \cite[Section 4.3]{karlhofer:graph} for definitions, which will not be needed for the proof). By \cite{auto:free1, auto:free2}, $\Aut(G)$ is generated by $\Aut^0(G)$ together with \emph{swap automorphisms}, i.e. automorphisms given by permuting two isomorphic factors.

\begin{proof}
    If at most one factor is isomorphic to $\mathbb{Z}$, this is covered by \cite[Theorem 1.1]{karlhofer:graph}. Otherwise, up to reordering we can write $G = G_1 * \cdots * G_m * F_n$, where $n \geq 2, m \geq 0$ and none of the $G_i$ is isomorphic to $\mathbb{Z}$.
    
    Let $p : G \to F_n$ be the projection. By \cite[Lemma 5.2]{karlhofer:graph}, $\ker(p)$ is $\Aut^0$-invariant (although the statement is more restrictive, the exact same proof applies to our case). Moreover, any two isomorphic factors are either among the $G_1, \ldots, G_m$ or among the free factors of $F_n$, so swap automorphisms also leave $\ker(p)$ invariant. It follows that $\ker(p)$ is characteristic in $G$, and thus by Proposition \ref{prop:char} the pullback of an unbounded $\Aut$-invariant homogeneous quasimorphism of $F_n$ (provided by Theorem \ref{thm:main}) is an unbounded $\Aut$-invariant homogeneous quasimorphism of $G$. Such a quasimorphism obviously vanishes on the factors $G_i$, and moreover it vanishes on the basis elements of $F_n$, by Lemma \ref{lem:camille}.
\end{proof}

\begin{remark}
Note that our proof is straightforward only because we can assume that at least two factors are isomorphic to $\mathbb{Z}$. For instance, in the case of the free product of two infinitely generated groups, the construction of \cite{karlhofer:free} is necessary in order to produce $\Aut$-invariant quasimorphisms.
\end{remark}

In light of Theorem \ref{thm:graph_product} and Corollary \ref{cor:freeprod} the following question is natural:

\begin{question}
\label{q:graph_product}

Let $\Gamma \mathcal{G}$ be a graph product of nontrivial (possibly infinitely generated) groups on a finite graph satisfying the hypotheses of Theorem \ref{thm:graph_product}. Is the space of $\Aut$-invariant homogeneous quasimorphisms on $\Gamma \mathcal{G}$ infinite-dimensional?
\end{question}

\section{Outlook}
\label{s:outlook}

In this last section, we discuss two directions for future research.

\subsection{Acylindrically hyperbolic groups}

A positive answer to the following question would give a natural extension of some of our main results:

\begin{question}
\label{q2}

Let $G$ be a finitely generated acylindrically hyperbolic group. Does $G$ admit an infinite-dimensional space of $\Aut$-invariant homogeneous quasimorphisms?
\end{question}

In \cite[Question 1.1]{auto:oneend}, the author asks whether $\Aut(G)$ is acylindrically hyperbolic, for every finitely generated acylindrically hyperbolic group $G$. In light of Theorem \ref{intro:thm:ah}, an affirmative answer to Genevois's question implies an affirmative answer to Question \ref{q2}. Nevertheless, Question \ref{q2} is strictly weaker: we have seen in Theorem \ref{thm:graph_product} that it is possible for a group $G$ to have an infinite-dimensional space of $\Aut$-invariant homogeneous quasimorphisms, even when $\Aut(G)$ is not acylindrically hyperbolic \cite[Theorem 1.5]{auto:graph}. \\

It is a well-known open problem in geometric group theory to determine whether acylindrical hyperbolicity is a commensurability invariant \cite[Question 2]{erratum}.
A milder question could be to ask whether groups that are commensurable to acylindrically hyperbolic groups have an infinite-dimensional space of quasimorphisms. This is related to $\Aut$-invariant homogeneous quasimorphisms in the following way:

\begin{lemma}
\label{lem:commensurable}

Suppose that Question \ref{q2} has an affirmative answer. Then every group that is commensurable to a finitely generated acylindrically hyperbolic group has an infinite-dimensional space of homogeneous quasimorphisms.
\end{lemma}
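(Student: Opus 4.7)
Let $H$ be commensurable to a finitely generated acylindrically hyperbolic group $G$, so that there exist finite-index subgroups $H_0 \leq H$ and $G_0 \leq G$ with $H_0 \cong G_0$. Note that $H$ is then itself finitely generated. My plan is to descend to a common \emph{normal} finite-index subgroup of $H$, invoke the hypothetical affirmative answer to Question~\ref{q2} there, and extend the resulting quasimorphisms back to $H$.

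Concretely, I would replace $H_0$ by its normal core $N := \bigcap_{h \in H} h H_0 h^{-1}$, which is still of finite index in $H$ and is now normal. Transporting through the isomorphism $H_0 \cong G_0$, the subgroup $N$ identifies with a finite-index subgroup of $G_0 \leq G$; being finite-index in the finitely generated acylindrically hyperbolic group $G$, it is itself finitely generated and acylindrically hyperbolic (both properties pass to finite-index subgroups). Invoking the hypothesised affirmative answer to Question~\ref{q2} on $N$, we obtain an infinite-dimensional subspace $V \subseteq Q_h(N)$ of $\Aut(N)$-invariant homogeneous quasimorphisms.

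Because $N$ is normal in $H$, conjugation by every $h \in H$ restricts to an element of $\Aut(N)$, so each $f \in V$ is in particular $H$-conjugation-invariant as a function on $N$. I would then invoke the standard extension principle for finite-index normal subgroups: setting $k := [H:N]$ and noting that $h^k \in N$ for all $h \in H$ (by Lagrange applied to $H/N$), the assignment $\tilde{f}(h) := \tfrac{1}{k} f(h^k)$ defines a linear injection $Q_h(N)^H \hookrightarrow Q_h(H)$, injectivity being immediate since $\tilde{f}$ restricts to $f$ on $N$ by homogeneity of $f$. Applied to $V$, this yields an infinite-dimensional subspace of $Q_h(H)$, as required.

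The main obstacle in writing this out completely is verifying that $\tilde{f}$ is indeed a quasimorphism on $H$, which is where the $H$-conjugation-invariance of $f$ on $N$ is essential. Though classical, the verification is somewhat delicate to carry out by hand, since one needs to rewrite expressions like $(h_1 h_2)^k$ as controlled products of conjugates of $h_1^k$ and $h_2^k$. A cleaner conceptual route goes via bounded cohomology, exploiting the amenability of the finite quotient $H/N$ to produce the Hochschild--Serre-type restriction isomorphism $Q_h(H)/H^1(H;\mathbb{R}) \cong (Q_h(N)/H^1(N;\mathbb{R}))^{H}$, which combined with finite-dimensionality of $H^1$ for finitely generated groups yields the extension step directly.
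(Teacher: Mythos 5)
Your proposal is correct and follows essentially the same route as the paper: pass to a finite-index subgroup that is simultaneously normal in $H$ and identified with a finite-index (hence finitely generated, acylindrically hyperbolic) subgroup of $G$, apply the hypothesised answer to Question~\ref{q2} there, observe that normality makes the resulting $\Aut$-invariant quasimorphisms $H$-conjugation-invariant, and transfer them back up. The paper's actual proof is precisely your ``cleaner conceptual route'': it avoids verifying the defect bound for $\tilde{f}(h)=\tfrac{1}{k}f(h^k)$ by using the Lyndon--Hochschild--Serre sequence in bounded cohomology together with amenability of the finite quotient (and the analogous statement in ordinary cohomology) to get $\EH^2_b(H)\cong \EH^2_b(N)^{H}$, and then concludes via finite-dimensionality of $H^1(N;\mathbb{R})$.
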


The proof will make use of basic notions from bounded cohomology. Since these concepts are used exclusively in this proof, we prefer not to introduce them fully, and rather refer the reader to the literature \cite{frigerio, monod} for more information.

\begin{proof}
Let $G_1$ be a finitely generated acylindrically hyperbolic group. Let $G_2$ be a group commensurable to $G_1$, thus there exist isomorphic finite-index subgroups $K_i < G_i$. Since finite-index subgroups of acylindrically hyperbolic groups are acylindrically hyperbolic, $K_1$ is acylindrically hyperbolic, and thus so is $K_2$. Up to passing to a finite-index normal subgroup, we may assume that $K_2$ is normal in $G_2$.

Now the Lyndon--Hochschild--Serre exact sequence in bounded cohomology, together with amenability of the quotient $G_2/K_2$, implies that the restriction $\HH^2_b(G_2) \to \HH^2_b(K_2)$ is an isomorphism onto $\HH^2_b(K_2)^{G_2}$ \cite[Example 12.4.3]{monod}. Moreover, since $K_2$ has finite index in $G_2$, the analogous statement holds at the level of cohomology: the restriction $\HH^2(G_2) \to \HH^2(K_2)$ is an isomorphism onto $\HH^2(K_2)^{G_2}$ \cite[Proposition III.10.4]{brown}. It follows that $\EH^2_b(G_2) \cong \EH^2_b(K_2)^{G_2}$, where we recall that $\EH^2_b$ denotes the exact part of bounded cohomology, i.e. the kernel of the comparison map $\HH^2_b \to \HH^2$.

The space of $\Aut$-invariant homogeneous quasimorphisms on $K_2$, modulo homomorphisms, is a subspace of $\EH^2_b(K_2)^{G_2}$. Since $K_2$ is finitely generated it can only have a finite-dimensional space of homomorphisms, therefore the hypothesis implies that $\EH^2_b(K_2)^{G_2}$ is infinite-dimensional. It follows that $\EH^2_b(G_2)$ is also infinite-dimensional, that is, $G_2$ has an infinite-dimensional space of homogeneous quasimorphisms.
\end{proof}

\begin{remark}
Note that the proof of Lemma \ref{lem:commensurable} does not need the full strength of Question \ref{q2}, but rather it only requires that for every group $A \leq \Aut(G)$ whose image in $\Out(G)$ is finite, there exists an infinite-dimensional space of $A$-invariant homogeneous quasimorphisms on $G$. Combining the arguments of Lemmas \ref{lem:main} and \ref{lem:commensurable}, one can show that this holds whenever $A$ itself has an infinite-dimensional space of homogeneous quasimorphisms.
\end{remark}

\subsection{Classifying elements with positive stable autocommutator length}
\label{s:dist}

Given a norm $| \cdot |$ on a group $G$, we say that the cyclic subgroup $\langle g \rangle$ is \emph{undistorted} if $|g^n|$ is finite and grows linearly with $n$. An element of the autocommutator subgroup has positive sacl if and only if $\langle g \rangle$ is undistorted in the norm given by autocommutator length. By Theorem \ref{thm:bavard}, this is equivalent to the existence of an $\Aut$-invariant homogeneous quasimorphism $f$ such that $f(g) \neq 0$. In this case, $\langle g \rangle$ is also undistorted for other mixed stable commutator lengths as in Lemma \ref{lem:mixed}, as well as $\Aut$-invariant word norms as in Proposition \ref{prop:wordnorm} (provided these take finite values on $\langle g \rangle$). \\

Recall that an element of a group $G$ is chiral if $g^k$ is not conjugate to $g^{-k}$ for all $k \in \mathbb{N}$, and an element $g$ of an acylindrically hyperbolic group $G$ is a \emph{generalized loxodromic} element if there exists an action of $G$ on a hyperbolic space $X$ such that $g$ is WPD for that action. By \cite[Theorem~1.2]{osin}, such an action may also be assumed to be acylindrical. The first part Theorem~\ref{thm:BF}, and the discussion in Section~\ref{s:norms}, imply the following.

\begin{proposition}
Let $G$ be a finitely generated group such that $\Aut(G)$ is acylindrically hyperbolic, and let $g \in [\Aut(G), G]$. If $\ad_g$ is a chiral generalized loxodromic element of $\Aut(G)$ then: \begin{itemize}
    \item There exists an $\Aut$-invariant homogeneous quasimorphism $f$ such that $f(g) \neq 0$, equivalently $\sacl(g) > 0$;
    \item The subgroup $\langle g \rangle$ is undistorted for any $\Aut$-invariant norm $|\cdot|_{S^{\Aut}}$ generated by a finite subset $S \subset G$.
\end{itemize}
\end{proposition}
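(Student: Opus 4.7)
The plan is to apply Theorem~\ref{thm:BF}(1) inside $\Aut(G)$ to produce a homogeneous quasimorphism $h$ on $\Aut(G)$ with $h(\ad_g) \neq 0$, and then pull back via Lemma~\ref{lem:main} to an $\Aut$-invariant homogeneous quasimorphism on $G$ witnessing $f(g) \neq 0$. From there, the equivalence with $\sacl(g) > 0$ follows directly from Bavard duality (Theorem~\ref{thm:bavard}), and the undistortion statement follows from Proposition~\ref{prop:wordnorm} combined with homogeneity of $f$.

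The key setup step is to find a \emph{non-elementary} WPD action of $\Aut(G)$ on a hyperbolic graph in which $\ad_g$ is a chiral loxodromic element. Since $\ad_g$ is generalized loxodromic by hypothesis, there exists an action of $\Aut(G)$ on a hyperbolic space where $\ad_g$ is loxodromic and WPD, and by \cite[Theorem~1.2]{osin} this may be upgraded to an acylindrical action on a Cayley graph of $\Aut(G)$ with respect to some (possibly infinite) generating set. Since $\Aut(G)$ is itself acylindrically hyperbolic, the action can moreover be arranged to be non-elementary: one can appeal to Osin's ``largest'' acylindrical action, in which every generalized loxodromic element remains loxodromic, or combine the action witnessing generalized loxodromicity of $\ad_g$ with an acylindrical action witnessing acylindrical hyperbolicity of $\Aut(G)$. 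Chirality of $\ad_g$ is an intrinsic conjugacy condition inside $\Aut(G)$, hence automatic.

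With such an action in hand, Theorem~\ref{thm:BF}(1) yields $h \in Q_h(\Aut(G))$ with $h(\ad_g) \neq 0$. Setting $f := \ad^* h$, Lemma~\ref{lem:main} guarantees that $f$ is $\Aut$-invariant on $G$ and $f(g) = h(\ad_g) \neq 0$. Since $g \in [\Aut(G), G]$, Theorem~\ref{thm:bavard} immediately gives $\sacl(g) \geq |f(g)|/(2D(f)) > 0$, and the reverse implication (positive $\sacl$ forces the existence of such an $f$) is also a direct appeal to the same duality. For the second bullet, Proposition~\ref{prop:wordnorm} yields $|f(g^n)| \leq C \cdot |g^n|_{S^{\Aut}}$ with $C = \sup_{s \in S}|f(s)| + D(f) < \infty$ by finiteness of $S$; combined with $|f(g^n)| = n |f(g)|$ from homogeneity, this supplies a linear lower bound $|g^n|_{S^{\Aut}} \geq n|f(g)|/C$, matched by the trivial upper bound $|g^n|_{S^{\Aut}} \leq n |g|_{S^{\Aut}}$ whenever the latter is finite, proving undistortion.

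The main obstacle I expect is the first step: pinning down a single action of $\Aut(G)$ that is simultaneously non-elementary, acylindrical (hence WPD), and has $\ad_g$ as a loxodromic. The bare definition of ``generalized loxodromic'' only produces an action where $\ad_g$ is WPD-loxodromic, with no a priori control over non-elementarity, so this is exactly where the hypothesis that $\Aut(G)$ is acylindrically hyperbolic — rather than merely ``contains a WPD element'' — does real work. Once the action is secured, the rest is a routine concatenation of the results already proved in the paper.
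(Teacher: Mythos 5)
Your argument is correct and matches the paper's intended proof, which is a one-line citation of exactly the same two ingredients: Theorem~\ref{thm:BF}(1) applied to $\Aut(G)$ (then Lemma~\ref{lem:main}) and the norm estimates of Section~\ref{s:norms}. The non-elementarity issue you flag as the main obstacle is in fact automatic and needs no appeal to a ``largest'' acylindrical action (which need not exist in general): once $\ad_g$ is loxodromic for an acylindrical action of $\Aut(G)$ on a hyperbolic space, Osin's trichotomy for acylindrical actions leaves only the non-elementary case, since the presence of a loxodromic rules out bounded orbits and acylindrical hyperbolicity of $\Aut(G)$ rules out its being virtually cyclic.
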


In general, it is a hard problem to classify the generalized loxodromic elements of an acylindrically hyperbolic group. For instance, even though it is known that the generalized loxodromic elements of mapping class groups are exactly the pseudo-Anosov elements \cite[Example~6.5]{osin}, classifying the generalized loxodromic elements in $\Out(F_n)$ is an open problem. However, we do know which inner automorphisms are generalized loxodromics in $\Aut(F_n)$:

\begin{theorem}[\cite{auto:infend}]
Let $g \in F_n$. Then $\ad_g$ is a generalized loxodromic element of $\Aut(F_n)$ if and only if $g$ is not elliptic in any $\Zmax$ splitting of $F_n$.
\end{theorem}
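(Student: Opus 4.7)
The plan is to prove the two implications separately, leaning on the machinery of \cite{auto:infend}.

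For the ``only if'' direction, I would use the general structural fact that in any acylindrically hyperbolic group, every generalized loxodromic element has virtually cyclic elementary closure, and in particular virtually cyclic centralizer. Since $Z(F_n) = 1$ for $n \geq 2$, the centralizer of $\ad_g$ in $\Aut(F_n)$ is precisely the stabilizer
\[
C_{\Aut(F_n)}(\ad_g) = \{\phi \in \Aut(F_n) : \phi(g) = g\}.
\]
Now suppose $g$ is elliptic in a $\Zmax$ splitting, so that $g$ lies in some vertex group $V$. In the amalgam case $F_n = V *_C W$ with $C$ a maximal cyclic edge group, every automorphism of $W$ that fixes $C$ pointwise extends by the identity on $V$ to an automorphism of $F_n$ fixing $g$. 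Since both vertex groups of a nontrivial $\Zmax$ splitting of $F_n$ are non-cyclic free groups, the subgroup of $\Aut(W)$ fixing $C$ pointwise is not virtually cyclic (it contains, e.g., copies of $\Aut$ of a smaller free group), producing the desired contradiction. The HNN case is analogous, where Dehn twists along the edge group together with partial conjugations fixing $V$ supply a non-virtually-cyclic centralizer.

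For the ``if'' direction, I would invoke the construction of \cite{auto:infend}: Genevois and Horbez build a hyperbolic graph on which $\Aut(F_n)$ acts acylindrically, assembled from $\Zmax$ splittings decorated with enough extra marking data that $\Inn(F_n)$ acts nontrivially. I would then verify that $\ad_g$ acts loxodromically on this graph precisely when $g$ is not elliptic in any $\Zmax$ splitting; loxodromicity in an acylindrical action immediately gives generalized loxodromicity, closing the loop.

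The main obstacle lies in this ``if'' direction. The subtlety is that inner automorphisms act trivially on the space of $\Zmax$ splittings up to equivariant isomorphism, so one cannot simply use a graph of splittings: one must work with a refined object recording combinatorial markings, and then translate loxodromic behaviour of $\ad_g$ into a precise statement about the action of $g$ on the associated Bass--Serre trees. This is where the full technical content of \cite{auto:infend} is indispensable. The ``only if'' direction, by contrast, is comparatively routine once the centralizer characterization of generalized loxodromics is in hand, modulo a short case analysis for HNN splittings and a maximality argument ruling out splittings with a cyclic vertex group.
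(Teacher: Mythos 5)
Your overall strategy matches the paper's: the ``if'' direction is delegated to the construction in \cite{auto:infend} (the paper cites the proof of Theorem~5.8 there), and the ``only if'' direction is run through the fact that a generalized loxodromic element has virtually cyclic elementary closure, hence virtually cyclic centralizer, so it suffices to exhibit a non-virtually-cyclic subgroup of $C_{\Aut(F_n)}(\ad_g)=\{\phi : \phi(g)=g\}$ when $g$ is elliptic. That reduction, and the identification of the centralizer with the stabilizer of $g$, are correct.

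The gap is in how you produce the large centralizer. You claim that for an amalgam $F_n=V\ast_C W$ with $g\in V$, the pointwise stabilizer of $C$ in $\Aut(W)$ is not virtually cyclic ``since it contains copies of $\Aut$ of a smaller free group.'' That justification only works when $C$ is a free factor of $W$. By Shenitzer's theorem, a one-edge cyclic splitting of a free group has its edge group as a free factor of \emph{one} of the two vertex groups, but it need not be a free factor of $W$ (the vertex group not containing $g$). If a generator $c$ of $C$ is not contained in a proper free factor of $W$ and the conjugacy class of $c$ has finite stabilizer in $\Out(W)$ (which happens for generic $c$), then $\{\psi\in\Aut(W):\psi(c)=c\}$ is virtually generated by conjugation by $c$, hence virtually cyclic, and your extended subgroup of $C_{\Aut(F_n)}(\ad_g)$ is only a $\mathbb{Z}$ --- not enough to contradict generalized loxodromicity. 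The paper sidesteps this by using the group of twists of the splitting (Section~2.6 of the Guirardel--Levitt reference): the Dehn twist $T$ (identity on $V$, conjugation by $c$ on $W$) fixes $g$, has infinite order in $\Out(F_n)$, and commutes with $\ad_g$, so $\langle T,\ad_g\rangle\cong\mathbb{Z}^2$ sits inside the centralizer regardless of which vertex group carries the free-factor structure; the same works for HNN edges. Your argument is repaired by exactly this observation --- you must pair your single twist direction with the inner direction $\ad_g$ rather than trying to find a large outer stabilizer inside $\Aut(W)$. You should also record the (easy) reduction from an arbitrary $\Zmax$ splitting to a one-edge splitting by collapsing edges, which you use implicitly.
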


Recall that a \emph{splitting} is a nontrivial minimal action of $F_n$ on a simplicial tree. A splitting is $\Zmax$ if all edge stabilizers are maximal cyclic, and is \emph{free} if all edge stabilizers are trivial.

\begin{proof}
The fact that such an element is a generalized loxodromic element is a consequence of the proof of Theorem~5.8 in \cite{auto:infend}. Conversely, if $g$ is elliptic in a $\Zmax$ splitting then the centralizer of $\ad_g$ in $\Aut(F_n)$ contains a copy of $\mathbb{Z}^2$ (the stabilizer of the splitting in $\Out(F_n)$ contains an infinite group of \emph{twists} that fix the conjugacy class of $g$: see e.g. Section~2.6 of \cite{GL}),  so this element is not a generalized loxodromic, as the centralizer of a WPD element must be virtually cyclic.
\end{proof}

This is interesting as there is a gap between generalized loxodromics and elements of $F_n$ that generate undistorted subgroups with respect to the primitive ($\Aut$-invariant) word norm:

\begin{theorem}[\cite{rank2}, Theorem~3.10]
Let $|\cdot|_{{\rm{Prim}}}$ be the ($\Aut$-invariant) word norm on $F_n$ given by the infinite generating set consisting of primitive elements - which coincides with $S^{\Aut}$, where $S$ is a free basis of $F_n$.  For an element $g \in F_n$, its cyclic subgroup $\langle g \rangle$ is undistorted in $|\cdot|_{{\rm{Prim}}}$ if and only if $g$ is not elliptic in any free splitting of $F_n$ (equivalently, $g$ is not contained in a proper free factor of $F_n$).
\end{theorem}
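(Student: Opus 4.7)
Direction 1 (elliptic in a free splitting implies $\langle g\rangle$ distorted). As vertex stabilizers of free splittings are the conjugates of proper free factors, we may assume, up to conjugating $g$ (which preserves primitive word length), that $g$ lies in a proper free factor $A$ of $F_n$. Pick a basis $\{a_1,\dots,a_m\}$ of $A$ and extend it to a basis $\{a_1,\dots,a_m,b_1,\dots,b_{n-m}\}$ of $F_n$. Since $g\in A$, the Nielsen-type transvection sending $b_1\mapsto b_1 g^k$ and fixing every other basis element is an automorphism of $F_n$ for every $k$, so $b_1 g^k$ is primitive. Then $g^k=b_1^{-1}\cdot(b_1 g^k)$ is a product of two primitive elements, hence $|g^k|_{\mathrm{Prim}}\le 2$ and $\langle g\rangle$ is distorted.

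Direction 2 (not in any proper free factor implies $\langle g\rangle$ undistorted). The plan is a Bestvina--Paulin style limit argument. Suppose, for contradiction, that $|g^{k_i}|_{\mathrm{Prim}}\le M$ along an increasing sequence $k_i\to\infty$, with factorizations $g^{k_i}=p_1^{(i)}\cdots p_M^{(i)}$ into primitive elements. Each $p_j^{(i)}$ is part of a basis of $F_n$, determining a marked rose (a point in Culler--Vogtmann Outer space $CV_n$) in which that primitive is realised by a single petal. Rescaling the lengths suitably and passing to a subsequence, one should extract a minimal $F_n$-action on an $\mathbb{R}$-tree $T$ on which $g$ fixes a point: $g^{k_i}$ is a concatenation of at most $M$ elements whose translation lengths are controlled in the rescaled markings, so $\ell_T(g)=\lim \tfrac{1}{k_i}\ell(g^{k_i})=0$. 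The Rips--Bestvina--Feighn--Sela classification of minimal stable $F_n$-actions on $\mathbb{R}$-trees then yields a geometric decomposition of $T$, from which, using that the approximating simplicial actions have trivial edge stabilizers (they are free splittings), one extracts an honest free splitting of $F_n$ in which $g$ is elliptic, contradicting the hypothesis.

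Main obstacle. The hard direction is where the real work lies, and one cannot expect to short-circuit it via $\Aut$-invariant quasimorphisms as in the rest of the paper. Indeed, any $\Aut$-invariant homogeneous quasimorphism nonzero on $g$ would give $\sacl(g)>0$ by Bavard duality (Theorem~\ref{thm:bavard}), while the quasimorphisms produced through Theorem~\ref{thm:BF} from the Genevois--Horbez action of $\Aut(F_n)$ are nonzero only on $\ad$-images of generalized loxodromic elements, i.e.\ on elements not elliptic in any $\Zmax$ splitting. As discussed in the paper's account of the ``gap'', this is strictly stronger than being non-elliptic in every free splitting. Thus a purely geometric argument---such as the limit scheme above---seems unavoidable, and the most delicate technical step is precisely to extract a \emph{free} splitting (not merely a $\Zmax$ splitting) of $F_n$ from the Rips decomposition of the limit tree: this is exactly what distinguishes the BM characterization from the generalized loxodromic criterion. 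A subsidiary subtlety is arranging the rescalings so that the limit tree is nondegenerate, which requires a judicious normalization of the $M$ markings produced by the factorizations.
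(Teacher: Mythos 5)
First, a point of order: the paper does not prove this statement --- it is quoted verbatim from Brandenbursky--Marcinkowski \cite[Theorem~3.10]{rank2}, so there is no in-paper proof to compare against. Your Direction~1 is correct and complete: it is the standard transvection trick (the same device as in the first half of Lemma~\ref{lem:camille}), writing $g^k = b_1^{-1}\cdot(b_1g^k)$ as a product of two primitives, which bounds $|g^k|_{\mathrm{Prim}}$ by $2$ and rules out linear growth.

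Direction~2, however, has a genuine gap, and your diagnosis of where the difficulty lies is also slightly off. The limiting argument as written does not go through: each $p_j^{(i)}$ is primitive, hence short (a single petal) only in \emph{its own} marked rose, and a factorization $g^{k_i}=p_1^{(i)}\cdots p_M^{(i)}$ gives you $M$ different points of $CV_n$ with no single rescaled marking in which all $M$ factors are simultaneously controlled. Consequently the inequality you need, $\ell_T(g)=\lim\frac{1}{k_i}\ell(g^{k_i})=0$, does not follow --- and even in a fixed tree, translation length is not subadditive under products without control on the relative position of axes. The step from a degenerate limit $\mathbb{R}$-tree to an honest simplicial \emph{free} splitting in which $g$ is elliptic is also left entirely open, as you acknowledge. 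More importantly, the ``main obstacle'' paragraph misreads the situation: the actual proof of the hard direction in \cite{rank2} \emph{is} via quasimorphisms --- the paper records immediately after the statement that Brandenbursky and Marcinkowski produce, for each $g$ not in a proper free factor, a homogeneous quasimorphism that is \emph{uniformly bounded on the set of all primitive elements} and nonzero on $g$; the bound $|f(g)|\le(\sup_{p}|f(p)|+D(f))\cdot|g|_{\mathrm{Prim}}$ then gives undistortion exactly as in Proposition~\ref{prop:wordnorm}. Being bounded on $S^{\Aut}$ is a strictly weaker requirement than being $\Aut$-invariant, which is precisely why their criterion (non-ellipticity in free splittings) is broader than the generalized-loxodromic/$\Zmax$ criterion; your claim that a quasimorphism route is unavailable and that a purely geometric limit argument is unavoidable is therefore not correct. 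To repair your write-up you would either need to carry out the bounded-on-primitives quasimorphism construction, or substantially rework the Bestvina--Paulin scheme so that a single sequence of markings controls all factors of the factorization at once.
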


Brandenbursky and Marcinkowski prove more than this: they show that elements that are not contained in a proper free factor are nonzero under some homogeneous quasimorphism that is uniformly bounded on all primitive elements. Note that this is a strictly larger class of elements than those that appear in the results of Genevois and Horbez. The following lemma, suggested to us by Camille Horbez, tells us that this gap between the Genevois--Horbez results and those of Brandenbursky--Marcinkowski is the only thing to worry about in terms of autocommutator length:

\begin{lemma}\label{lem:camille}
If $g \in F_n$ is contained in a proper free factor or $\ad_g$ is achiral in $\Aut(F_n)$ then $\sacl(g)=0$.
\end{lemma}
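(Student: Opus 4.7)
The plan is to treat the two disjunctive hypotheses separately, in both cases reducing to tools already in place. Throughout I will use the fact noted after Definition~\ref{deif:autocommutator} that $[\Aut(F_n), F_n] = F_n$, so $\sacl$ is finite-valued on every element of $F_n$ and Bavard duality (Theorem~\ref{thm:bavard}) applies.

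For the achiral case, I would first unpack the definition: $\ad_g$ being achiral in $\Aut(F_n)$ provides $k \geq 1$ and $\phi \in \Aut(F_n)$ with $\phi \ad_{g^k} \phi^{-1} = \ad_{g^{-k}}$. Since $\phi \ad_{g^k} \phi^{-1} = \ad_{\phi(g^k)}$ and $F_n$ has trivial center for $n \geq 2$, this lifts to an equality $\phi(g^k) = g^{-k}$ inside $F_n$. For any $\Aut$-invariant homogeneous quasimorphism $f$, combining $\Aut$-invariance and homogeneity then gives
\[ f(g^k) = f(\phi(g^k)) = f(g^{-k}) = -f(g^k), \]
so $f(g) = 0$. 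Bavard duality concludes $\sacl(g) = 0$.

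For the proper free factor case, my plan is to exhibit $g^k$ as a single autocommutator for every $k \geq 1$, which directly yields $\acl(g^k) \leq 1$ and hence $\sacl(g) \leq \lim_{k\to\infty} 1/k = 0$. Write $F_n = A * B$ with $g \in A$ and $B$ nontrivial, and choose a basis element $b$ of $B$, extended to a free basis of $F_n$. The Nielsen-type transvection $\phi_k \in \Aut(F_n)$ defined by $\phi_k(b) = g^k b$ and $\phi_k(x) = x$ for every other basis element is a well-defined automorphism (with inverse given by $b \mapsto g^{-k} b$ on the same basis), and by construction
\[ [\phi_k, b] = \phi_k(b) b^{-1} = g^k. \]

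Neither half presents a serious obstacle; the only verification beyond definitions is that $b \mapsto g^k b$ really extends to an automorphism, which is an elementary transvection argument. The content of the lemma is the dichotomy itself, expressing two distinct geometric obstructions to $\sacl(g) > 0$: a symmetry of $g$ under some automorphism flipping a power of $\ad_g$, versus $g$ sitting inside a free factor where transvections along the complementary factor produce arbitrary powers of $g$ at cost one.
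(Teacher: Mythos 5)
Your proof is correct. The free-factor half is exactly the paper's argument: a transvection $b \mapsto g^k b$ along a complementary factor exhibits $g^k$ as a single autocommutator, so $\acl(g^k)\leq 1$ and $\sacl(g)=0$. For the achiral half you diverge from the paper: you pass to quasimorphisms, showing $f(g^k)=f(\phi(g^k))=f(g^{-k})=-f(g^k)$ for every $\Aut$-invariant homogeneous $f$, and then invoke the hard direction of Bavard duality for $\sacl$ (Theorem~\ref{thm:bavard}) to conclude. The paper instead stays entirely on the norm side: from $\phi(g^k)=g^{-k}$ it computes $[\phi, g^{km}]=\phi(g^{km})g^{-km}=g^{-2km}$, so arbitrarily high powers of $g$ (up to sign and the factor $2k$) are single autocommutators and $\sacl(g)=0$ directly from the definition. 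Both are valid; the paper's route is more elementary and self-contained, since it needs only the trivial inequality $\acl \leq 1$ rather than the cited duality theorem, whereas your route has the mild virtue of also recording the stronger statement that \emph{every} $\Aut$-invariant homogeneous quasimorphism vanishes on an achiral element (a fact the paper uses implicitly elsewhere when remarking that homogeneous quasimorphisms vanish on achiral elements). Your unpacking of achirality via $\phi\ad_{g^k}\phi^{-1}=\ad_{\phi(g^k)}$ and triviality of $Z(F_n)$ is actually slightly more careful than the paper's, which asserts $\phi(g)=g^{-1}$ without passing through powers.
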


\begin{proof}
Let $x$ be a basis element in a complementary factor to one containing $g$. Then there exists an automorphism $\phi$ mapping $x \mapsto g^nx$ and fixing all other elements from an appropriately chosen basis, so that $[\phi,x]=\phi(x)x^{-1}=g^n$ and $\sacl(g)=0$. If $g$ is achiral in $\Aut(F_n)$ then there exists $\phi$ such that $\phi(g)=g^{-1}$. Thus $[\phi, g^n]=g^{-2n}$, so $\sacl(g)=0$, also.
\end{proof}

One possible way of accounting for this is to weaken the WPD condition to WWPD. This was used to great success in \cite{BBF} in the study of stable commutator length on mapping class groups, and leads naturally to the following question:

\begin{question}
If $g \in F_n$ is chiral and not contained in a proper free factor, does there exist an action of $\Aut(F_n)$ on a hyperbolic graph such that $\ad_g$ is WWPD?
\end{question}

A positive answer to this question (combined with the methods in \cite{BBF}) would imply that if $g\in F_n$ is not contained in a proper free factor and $g$ is chiral in $\Aut(F_n)$, then $\sacl(g)>0$ (hence providing a converse to Lemma~\ref{lem:camille}).

\begin{remark}
It was suggested to us by the referee that the WWPD condition might also be useful for tackling situations in Theorem~\ref{intro:thm:main} where peripheral subgroups are not finitely generated. Indeed, the example $G=Z \ast Z$ where $Z$ is an infinite direct sum of $\mathbb{Z}$'s is given by Genevois and Horbez as an example of a relatively hyperbolic group such that $\Aut(G)$ is not acylindrically hyperbolic \cite[Remark~5.9]{auto:infend}. However, the action of $G$ on the usual Bass--Serre tree $T$ of the free product $Z \ast Z$ extends to an action of $\Aut(G)$ (this holds for an arbitrary free product of \emph{two} freely indecomposable groups, as the free product decomposition is essentially unique and the associated Outer space is a single point). Although this action is not acylindrical, one can show that loxodromic elements of $\Inn(G)$ on this tree are WWPD with respect to the (extended) $\Aut(G)$ action on $T$. This could be used to  reprove Karlhofer's result about Aut-invariant quasimorphisms on free products \cite{karlhofer:free}, although the quasimorphisms might be less explicit. More generally, one would hope that the constructions in \cite{auto:infend} will create actions where inner automorphisms are WWPD even when the automorphism groups are not acylindrically hyperbolic.
\end{remark}

\subsection{No $\Aut$-invariant quasimorphisms}

On the opposite end of the spectrum, we can ask about which groups admit unbounded quasimorphisms, but no $\Aut$-invariant ones. In particular, we ask the following question:

\begin{question}
\label{q:fg}

Let $G$ be a finitely generated group with an infinite-dimensional space of homogeneous quasimorphisms. Is it possible that $G$ admits no unbounded $\Aut$-invariant quasimorphism?
\end{question}

If we remove the hypothesis of finite generation, then we do have examples. The following was kindly pointed out to us by the referee:

\begin{example}
Let $F$ be a free group of infinite rank. Then $F$ surjects onto a free group of finite rank, therefore it has an infinite-dimensional space of homogeneous quasimorphisms (even modulo homomorphisms). However, it has no unbounded $\Aut$-invariant quasimorphism. Indeed, the argument from Lemma \ref{lem:camille} applies to $F$ as well, and since \emph{every} element in $F$ is contained in a proper free factor, $\sacl$ vanishes identically on $F$. We conclude by Proposition \ref{prop:aut:scl}.
\end{example}

Restricting to finitely generated groups, we can produce examples, but only with a finite-dimensional space of homogeneous quasimorphisms:

\begin{example}
Let $\overline{T}$ be the lift to the real line of Thompson's group $T$. The group $\overline{T}$ is perfect, and $Q_h(\overline{T})$ is generated by the rotation quasimorphism
$$\rot(g) = \lim\limits_{n \to \infty} \frac{g^n(0)}{n}$$
(see e.g. \cite[Section 5.1.3]{calegari}). Therefore $Q_h(\overline{T})$ is one-dimensional, and does not contain unbounded homomorphisms. Moreover, let $\phi \in \Aut(\overline{T})$ be the automorphism given by the change of orientation, namely $\phi(g)(x) = -g(-x)$ for all $x \in \mathbb{R}$. Then $\rot(\phi(g)) = - \rot(g)$, and therefore $\rot$ is not $\Aut$-invariant.

Similarly, $Q_h(\overline{T}^n)$ is $n$-dimensional, and contains no unbounded homomorphism, and no unbounded $\Aut$-invariant homogeneous quasimorphism. This follows from the fact that the functor $Q_h$ commutes with finite direct products, and $\Aut(\overline{T}^n)$ contains $\Aut(\overline{T})^n$.
\end{example}

Note that the argument is made possible by the very explicit description of $Q_h(\overline{T})$. Therefore it is unlikely that a similar argument will work in the case of Question \ref{q:fg}.

\vspace{0.5cm}

\footnotesize

\bibliographystyle{alpha}
\bibliography{ref}

\vspace{0.5cm}

\normalsize

\noindent{\textsc{Department of Mathematics, ETH Z\"urich, Switzerland}}

\noindent{\textit{E-mail address:} \texttt{francesco.fournier@math.ethz.ch}} \\

\noindent{\textsc{Mathematical Institute, Oxford University, U.K.}}

\noindent{\textit{E-mail address:} \texttt{wade@maths.ox.ac.uk}}

\end{document}